\theoremstyle{plain}
\newtheorem{thm}{Theorem}[section]
\newtheorem{lem}[thm]{Lemma}
\newtheorem{prop}[thm]{Proposition}
\newtheorem{cor}[thm]{Corollary}
\theoremstyle{definition}
\newtheorem{asp}{Assumption}
\numberwithin{equation}{section}
\begin{document}
\title
[stable CLT for super-OU processes, II]
{Stable Central Limit Theorems for Super Ornstein-Uhlenbeck Processes, II}
\author
[Y.-X. Ren, R. Song, Z. Sun and J. Zhao]
{Yan-Xia Ren, Renming Song, Zhenyao Sun and Jianjie Zhao}
\address{
	Yan-Xia Ren \\
	LMAM School of Mathematical Sciences \& Center for Statistical Science \\
  	Peking University \\
  	Beijing 100871, P. R. China}
\email{yxren@math.pku.edu.cn}
\thanks{Yan-Xia Ren is supported in part by NSFC (Grant Nos. 11671017  and 11731009) and LMEQF}
\address{
  	Renming Song \\
  	Department of Mathematics \\
  	University of Illinois at Urbana-Champaign \\
  	Urbana, IL 61801, USA}
\email{rsong@illinois.edu}
\thanks{Renming Song is support in part by a grant from the Simons Foundation (\#429343, Renming Song)}
\address{
  	Zhenyao Sun \\
  	Faculty of Industrial Engineering and Management\\
  	Technion, Israel Institute of Technology \\
  	Haifa 3200003, Israel}
\email{zhenyao.sun@gmail.com}
\thanks{Zhenyao Sun is the corresponding author}
\address{
  	Jianjie Zhao \\
  	School of Mathematical Sciences \\
  	Peking University \\
  	Beijing 100871, P. R. China}
\email{zhaojianjie@pku.edu.cn}

\begin{abstract}
	This paper is a continuation of our recent paper (Elect. J. Probab. \textbf{24} (2019), no. 141) and is devoted to the asymptotic behavior of a class of supercritical super Ornstein-Uhlenbeck processes $(X_t)_{t\geq 0}$ with branching mechanisms of infinite second moment. In the aforementioned paper, we proved stable central limit theorems for  $X_t(f) $ for {\it some} functions $f$ of polynomial growth in three different regimes. However, we were not able to prove central limit theorems for $X_t(f) $ for {\it all} functions $f$ of polynomial growth.
	In this note, we show that the limiting stable random variables in the three different regimes are independent, and as a consequence, we get stable central limit theorems for  $X_t(f) $ for {\it all} functions $f$ of polynomial growth.
\end{abstract}
\subjclass[2010]{60J68, 60F05}
\keywords{superprocesses, Ornstein-Uhlenbeck processes, stable distribution, central limit theorem, law of large numbers, branching rate regime}
\maketitle

\section{Introduction and main result}
\label{subsec:M}
	Let $d \in \mathbb N:= \{1,2,\dots\}$ and $\mathbb R_+:= [0,\infty)$.
	Let $\xi=\{(\xi_t)_{t\geq 0}; (\Pi_x)_{x\in \mathbb R^d}\}$ be an $\mathbb R^d$-valued Ornstein-Uhlenbeck process (OU process) with generator
\begin{align}
  	Lf(x)
  	= \frac{1}{2}\sigma^2\Delta f(x)-b x \cdot \nabla f(x),
  	\quad  x\in \mathbb R^d, f \in C^2(\mathbb R^d),
\end{align}
	where $\sigma > 0$ and $b > 0$ are constants.
	Let $\psi$ be a function on $\mathbb R_+$ of the form
\begin{align}
  	\psi(z)
  	=- \alpha z + \rho z^2 + \int_{(0,\infty)} (e^{-zy} - 1 + zy) \pi(\mathrm dy),
  	\quad  z \in \mathbb R_+,
\end{align}
	where $\alpha > 0 $, $\rho \geq0$ and $\pi$ is a measure on $(0,\infty)$ with $\int_{(0,\infty)}(y\wedge y^2)\pi(\mathrm dy)< \infty$.
	$\psi$ is referred to as a branching mechanism and $\pi$ is referred to as the L\'evy measure of $\psi$.
	Denote by $\mathcal M(\mathbb R^d)$ ($\mathcal M_c(\mathbb R^d)$) the space of all finite Borel measures (of compact support) on $\mathbb R^d$.
	Denote by $\mathcal B(\mathbb R^d, \mathbb R)$ ($\mathcal B(\mathbb R^d, \mathbb R_+)$) the space of all $\mathbb R$-valued ($\mathbb R_+$-valued)
	Borel functions on $\mathbb R^d$.
	For $f,g\in \mathcal B(\mathbb R^d, \mathbb R)$ and $\mu \in \mathcal M(\mathbb R^d)$, write $\mu(f)= \int f(x)\mu(\mathrm dx)$ and $\langle f, g\rangle = \int f(x)g(x) \mathrm dx$ whenever the integrals make sense.
	We say a real-valued Borel function $f$ on $\mathbb R_+\times \mathbb R^d$ is \emph{locally bounded} if, for each $t\in \mathbb R_+$, we have $ \sup_{s\in [0,t],x\in \mathbb R^d} |f(s,x)|<\infty. $
	For any $\mu \in \mathcal M(\mathbb R^d)$, we write $\|\mu\| = \mu(1)$.
	For any $\sigma$-finite signed measure $\mu$, denote by $|\mu|$ the total variation measure of $\mu$.

	We say that an $\mathcal M(\mathbb R^d)$-valued Hunt process $X = \{(X_t)_{t\geq 0}; (\mathbb{P}_{\mu})_{\mu \in \mathcal M(\mathbb R^d)}\}$ is a \emph{super Ornstein-Uhlenbeck process (super-OU process)} with branching mechanism $\psi$, or a \emph{$(\xi, \psi)$-superprocess}, if for each non-negative bounded Borel function $f$ on $\mathbb R^d$, we have
\begin{align}
  	\label{eq: def of V_t}
  	\mathbb{P}_{\mu}[e^{-X_t(f)}]
  	= e^{-\mu(V_tf)},
   	\quad t\geq 0, \mu \in \mathcal M(\mathbb R^d),
\end{align}
	where $(t,x) \mapsto V_tf(x)$ is the unique locally bounded non-negative solution to the equation
\begin{align}
  	V_tf(x) + \Pi_x \Big[ \int_0^t\psi (V_{t-s}f(\xi_s) ) \mathrm ds\Big]
	= \Pi_x [f(\xi_t)],
	\quad x\in \mathbb R^d, t\geq 0.
\end{align}	
	The existence of such super-OU process $X$ is well known, see \cite{Dynkin1993Superprocesses,Li2011Measure-valued} for instance.

	There have been many central limit theorem type results for branching processes, branching diffusions and superprocesses, under the second moment condition.
	See \cite{AdamczakMilos2015CLT, AsmussenHering1983Branching, Athreya1969Limit,Athreya1969LimitB,Athreya1971Some, Heyde1970A-rate, HeydeBrown1871An-invariance, HeydeLeslie1971Improved, KestenStigum1966Additional,KestenStigum1966A-limit,Milos2012Spatial,RenSongZhang2014Central,RenSongZhang2014CentralB, RenSongZhang2015Central,RenSongZhang2017Central}.
	For a detailed literature review, see \cite[Section 1.1]{RenSongSunZhao2019Stable}.
	There are also  central  limit theorem type results for supercritical branching processes and branching Markov processes with branching mechanisms of infinite second moment.
	For earlier papers, see \cite{Asmussen76Convergence, Heyde1971Some}. 	
	Recently, Marks and Milo\'s \cite{MarksMilos2018CLT} established some spatial central limit theorems for supercritical branching OU processes with a special stable offspring distribution.
	In \cite{RenSongSunZhao2019Stable}, we established stable central limit theorems for super-OU processes $X$ with branching mechanisms $\psi$ satisfying the following two assumptions.

\begin{asp}[Grey's condition]
\label{asp: Greys condition}
	There exists $z' > 0$ such that $\psi(z) > 0$ for all $z>z'$ and  $\int_{z'}^\infty \psi(z)^{-1} \mathrm dz < \infty$.
\end{asp}

\begin{asp}
\label{asp: branching mechanism}
  	There exist constants $\eta > 0$ and $\beta \in (0,1)$ such that
\begin{align}
    \int_{(1,\infty)}y^{1+\beta +\delta}\Big|\pi(\mathrm dy)-\frac{\eta \mathrm dy}{\Gamma(-1-\beta)y^{2+\beta}}\Big| <\infty
\end{align}
	for some $\delta > 0$.
\end{asp}

	It is known (see \cite[Theorems 12.5 \& 12.7]{Kyprianou2014Fluctuations} for example) that, under Assumption \ref{asp: Greys condition}, the \emph{extinction event}
$
	D
	:=\{\exists t\geq 0~\text{such that}~ \|X_t\| =0 \}
$ 
	is non-trivial with respect to $\mathbb P_\mu$ for each  $\mu \in \mathcal M(\mathbb R^d)\setminus\{0\}$.
	It follows from \cite[Lemma 2.2]{RenSongSunZhao2019Stable}  that, if Assumption \ref{asp: branching mechanism} holds, then $\eta$ and $\beta$ are uniquely determined by the L\'evy measure $\pi$.

	We now recall some notation and basic facts from \cite{RenSongSunZhao2019Stable}.
	We use  $(P_t)_{t\geq 0}$ to denote the transition semigroup of $\xi$.	
	Define $P^{\alpha}_t f(x) := e^{\alpha t} P_t f(x) = \Pi_x [e^{\alpha t}f(\xi_t)]$ for each $x\in \mathbb R^d$, $t\geq 0$ and $f\in \mathcal B(\mathbb R^d, \mathbb R_+)$.
	It is known that $\mathbb{P}_{\mu}[X_t (f)]  = \mu( P^\alpha_t f)$ for all $\mu\in \mathcal M(\mathbb R^d)$, $t\geq 0$ and $f\in \mathcal B(\mathbb R^d, \mathbb R_+)$.
	The OU process $\xi$ has an invariant probability on $\mathbb R^d$:
\begin{align}
  	\varphi(x)\mathrm dx
  	:=\Big (\frac{b}{\pi \sigma^2}\Big )^{d/2}\exp \Big(-\frac{b}{\sigma^2}|x|^2 \Big)\mathrm dx.
\end{align}
	Let $L^2(\varphi)$ be the Hilbert space with inner product
\begin{align}
  	\langle f_1, f_2 \rangle_{\varphi}
  	:= \int_{\mathbb R^d}f_1(x)f_2(x)\varphi(x) \mathrm dx, \quad f_1,f_2 \in L^2(\varphi).
\end{align}
	Let $\mathbb Z_+ := \mathbb N\cup\{0\}$.
	It is known that $(P_t)_{t\geq 0}$ is a strongly continuous semigroup in $L^2(\varphi)$ and its generator $L$ has discrete spectrum $\sigma(L)= \{-bk: k \in \mathbb Z_+\}$.	
		The eigenfunctions of $L$ consists a family of polynomials $\{\phi_p:p\in \mathbb Z_+^d\}$ which forms a complete orthonormal basis of $L^2(\varphi)$.
For each $p\in \mathbb Z_+^d$, 
$\phi_p$ is an eigenfunction of $L$ corresponding to the eigenvalue $b|p|$, where $|p|:= \sum_{k=1}^d p_k$.
	For each function $f\in L^2(\varphi)$, define the order of $f$ as
$\kappa_f
  	:= \inf \left \{k\geq 0: \exists ~ p\in \mathbb Z_+^d , {\rm ~s.t.~} |p|=k {\rm ~and~}  \langle f, \phi_p \rangle_{\varphi}\neq 0\right \}$
with the convention that $\inf\emptyset=\infty.$
	
	For $p\in \mathbb{Z}_+^d$, define
$H_t^p
    	:= e^{-(\alpha-|p|b)t}X_t(\phi_p), t\geq 0.$
	For each $u \neq -1$, we write $\tilde u = u/(1+ u)$.
	We have shown in \cite[Lemma 3.2]{RenSongSunZhao2019Stable} that
	for any
	$\mu\in \mathcal M_c(\mathbb R^d)$, $(H_t^p)_{t\geq 0}$ is a $\mathbb{P}_{\mu}$-martingale.
	Furthermore, if $\alpha \tilde \beta>|p|b$, then for every $\gamma\in (0, \beta)$ and $\mu\in \mathcal M_c(\mathbb R^d)$,  $(H_t^p)_{t\geq 0}$ is a $\mathbb{P}_{\mu}$-martingale bounded in $L^{1+\gamma}(\mathbb{P}_{\mu})$;
	thus $H^p_{\infty}:=\lim_{t\rightarrow \infty}H_t^p$ exists $\mathbb{P}_{\mu}$-almost surely and in $L^{1+\gamma}(\mathbb P_\mu)$.
We will write $H^0_t$ and $H^0_\infty$ as $H_t$ and $H_\infty$, respectively.

	Denote by $\mathcal P \subset L^2(\varphi)$ the class of functions of polynomial growth on $\mathbb R^d$, i.e.,
$\mathcal{P}
  	:= \{f\in \mathcal B(\mathbb R^d, \mathbb R):\exists C>0, n \in \mathbb Z_+ \text{~s.t.~} \forall x\in \mathbb R^d, |f(x)|\leq C(1+|x|)^n \}.$
	Define
	$\mathcal C_\mathrm s := \mathcal P \cap \overline{\operatorname{Span}} \{ \phi_p: \alpha \tilde \beta < |p| b \},
		~
	\mathcal C_\mathrm c   := \mathcal P \cap \operatorname{Span} \{ \phi_p : \alpha \tilde \beta = |p| b \} $, and
	$ \mathcal C_\mathrm l   := \mathcal P \cap \operatorname{Span} \{ \phi_p: \alpha \tilde \beta > |p| b \}$.
	Note that $\mathcal C_\mathrm s$ is an infinite dimensional space, $ \mathcal C_\mathrm l$ and $\mathcal C_\mathrm c$ are finite dimensional spaces, and $\mathcal C_c$ might be empty.
	Define a semigroup
\begin{align}\label{e:Tt}
	T_t f
	:= \sum_{p \in \mathbb Z_+^d} e^{-\big| |p|b - \alpha \tilde \beta \big|t} \langle f, \phi_p \rangle_{\varphi} \phi_p,
	\quad t\geq 0, f\in \mathcal P,
\end{align}
	and a family of functionals
\begin{align}\label{eq:M.13}
	m_t[f]
	:= \eta \int_0^t \mathrm du \int_{\mathbb R^d} \big(-iT_u f(x)\big)^{1+\beta} \varphi(x) \mathrm dx,
	\quad 0 \leq t< \infty, f\in \mathcal P.
\end{align}
	We have shown in \cite[Lemma 2.6 and Proposition 2.7]{RenSongSunZhao2019Stable} that,
\begin{equation} \label{eq:I:R:3}
\begin{minipage}{0.9\textwidth}
	for each $f\in \mathcal P$, there exists a $(1+\beta)$-stable random variable $\zeta^f$ with characteristic function $\theta \mapsto e^{m[\theta f]}, \theta \in \mathbb R,$ where
\begin{equation}
	m[f]
	:=
\begin{cases}
	\lim_{t\to \infty} m_t[f], &
	f \in \mathcal C_\mathrm s \oplus \mathcal C_\mathrm l, \\
	\lim_{t\to \infty} \frac{1}{t} m_t[f], & f\in \mathcal P \setminus \mathcal C_\mathrm s \oplus \mathcal C_\mathrm l.
\end{cases}
\end{equation}
\end{minipage}
\end{equation}
		For each $\mu \in \mathcal M(\mathbb R^d)\setminus\{0\}$, write $\widetilde {\mathbb P}_\mu(\cdot):= \mathbb P_\mu(\cdot | D^c).$
	We also proved in \cite[Theorem 1.6]{RenSongSunZhao2019Stable} that
	\begin{equation}\label{eq:oldResult}
\begin{minipage}{0.9\textwidth}
	if $\mu\in \mathcal M_\mathrm c(\mathbb R^d)\setminus \{0\}$, $f_\mathrm s\in \mathcal C_\mathrm s$, $f_\mathrm c \in \mathcal C_\mathrm c$ and $f_\mathrm l \in \mathcal C_\mathrm l$, then under $\mathbb {\widetilde P}_\mu$,
\[
\begin{split}
	&e^{-\alpha t}\|X_t\| \xrightarrow[t\to \infty]{\text{a.s.}} \widetilde{H}_\infty;
	\quad \frac{X_t(f_\mathrm s)}{\|X_t\|^{1-\tilde \beta}} \xrightarrow[t\to \infty]{d} \zeta^{f_\mathrm s};
	\\&\frac{X_t(f_\mathrm c)}{\|t X_t\|^{1-\tilde \beta} } \xrightarrow[t\to \infty]{d} \zeta^{f_\mathrm c};
	\quad\frac{X_t(f_\mathrm l) - \mathrm x_t(f_\mathrm l)}{\|X_t\|^{1-\tilde \beta}}
	\xrightarrow[t\to \infty]{d} \zeta^{-f_\mathrm l},
\end{split}
\]
	where $\widetilde H_\infty$ has the distribution of $\{H_{\infty}; \widetilde {\mathbb P}_\mu\}$;
	$\zeta^{f_\mathrm s}$, $\zeta^{f_\mathrm c}$ and $\zeta^{-f_\mathrm l}$ are the $(1+\beta)$-stable random variables described in \eqref{eq:I:R:3}; and
\[
	\mathrm x_t(f)
	:= \sum_{p\in \mathbb Z^d_+:\alpha \tilde \beta>|p|b}\langle f,\phi_p\rangle_\varphi e^{(\alpha-|p|b)t}H^p_{\infty},
	\quad t\geq 0, f\in \mathcal P.
\]
\end{minipage}
\end{equation}
	The above result gives the central limit theorem for $X_t(f)$ if $f\in \mathcal P\setminus\{0\}$ satisfies $\alpha \tilde \beta \leq \kappa_f b$.
	A general  $f \in \mathcal P$ can be decomposed as $f_s + f_c + f_l$ with $f_s \in \mathcal C_\mathrm s$, $f_c \in \mathcal C_\mathrm c$ and $f_l \in \mathcal C_\mathrm l$;
  and if $f\in  \mathcal P$ satisfies $\alpha \tilde \beta > \kappa_f b$, then $f_\mathrm s$ and $f_\mathrm l$ maybe non-zero while $f_\mathrm c \equiv 0$.
	In \cite{RenSongSunZhao2019Stable}, we were not able to establish a central limit theorem in this case. We conjectured there that the limit random variables in \eqref{eq:oldResult} for $f_\mathrm s\in \mathcal C_\mathrm s$, $f_\mathrm c\in \mathcal C_\mathrm c$ and $f_\mathrm l\in \mathcal C_\mathrm l$ are independent.
	Once this asymptotic independence is established, a central limit theorem  for $ X_t(f)$ for all $f\in  \mathcal P$ would follow.

	The main purpose of this note is to show that the limit  random variables in \eqref{eq:oldResult} are independent.

\begin{thm}
\label{thm:M}
	If $\mu\in \mathcal M_\mathrm c(\mathbb R^d)\setminus \{0\}$, $f_\mathrm s\in \mathcal C_\mathrm s$, $f_\mathrm c \in \mathcal C_\mathrm c$ and $f_\mathrm l \in \mathcal C_\mathrm l$, then under $\mathbb {\widetilde P}_\mu$,
\begin{align} \label{eq:M.1}
	&S(t):=
	\Bigg(e^{-\alpha t}\|X_t\|, \frac{X_t(f_\mathrm s)}{\|X_t\|^{1-\tilde \beta}},\frac{X_t(f_\mathrm c)}{\|tX_t\|^{1-\tilde \beta}}, \frac{ X_t(f_\mathrm l) - \mathrm x_t(f_\mathrm l)}{\|X_t\|^{1-\tilde \beta}}
	\Bigg)
	\\&\xrightarrow[t\rightarrow \infty]{d}(\widetilde H_\infty,\zeta^{f_\mathrm s},\zeta^{f_\mathrm c},\zeta^{-f_\mathrm l}),
\end{align}
	where $\mathrm x_t(f_\mathrm l)$ is defined in 
	\eqref{eq:oldResult} 
	with $f$ replaced with $f_\mathrm l$;
	$\widetilde H_\infty$ has the distribution of $\{H_{\infty}; \widetilde {\mathbb P}_\mu\}$;
	$\zeta^{f_\mathrm s}$, $\zeta^{f_\mathrm c}$ and $\zeta^{-f_\mathrm l}$ are the $(1+\beta)$-stable random variables described in \eqref{eq:I:R:3};
	$\widetilde H_\infty$,  $\zeta^{f_\mathrm s}$, $\zeta^{f_\mathrm c}$ and $\zeta^{-f_\mathrm l}$ are independent.
\end{thm}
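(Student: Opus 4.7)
Proof plan. I would prove the joint convergence and independence by computing the joint characteristic function of $S(t)$ and showing that its limit factorizes, following the approach of \cite{RenSongSunZhao2019Stable} but now applied to a test function mixing all three spectral regimes. Since $e^{-\alpha t}\|X_t\|\to\widetilde H_\infty$ $\widetilde{\mathbb P}_\mu$-a.s.\ on $D^c$, Slutsky's theorem reduces \eqref{eq:M.1} together with the claimed independence to the joint convergence under $\widetilde{\mathbb P}_\mu$ of the deterministic-normalization vector
\[
\widetilde S(t):=\Big(e^{-\alpha t}\|X_t\|,\,e^{-\alpha t(1-\tilde\beta)}X_t(f_\mathrm s),\,(te^{\alpha t})^{-(1-\tilde\beta)}X_t(f_\mathrm c),\,e^{-\alpha t(1-\tilde\beta)}(X_t(f_\mathrm l)-\mathrm x_t(f_\mathrm l))\Big)
\]
to $\big(\widetilde H_\infty,\widetilde H_\infty^{1-\tilde\beta}\zeta^{f_\mathrm s},\widetilde H_\infty^{1-\tilde\beta}\zeta^{f_\mathrm c},\widetilde H_\infty^{1-\tilde\beta}\zeta^{-f_\mathrm l}\big)$, with $\widetilde H_\infty$ independent of the mutually independent triple of stable limits.

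For $(\theta_0,\theta_1,\theta_2,\theta_3)\in\mathbb R^4$ set $h_t:=\theta_0 e^{-\alpha t}+\theta_1 e^{-\alpha t(1-\tilde\beta)}f_\mathrm s+\theta_2(te^{\alpha t})^{-(1-\tilde\beta)}f_\mathrm c+\theta_3 e^{-\alpha t(1-\tilde\beta)}f_\mathrm l$. The $\mathcal F_\infty$-measurable centering $\mathrm x_t(f_\mathrm l)$ is represented as the $L^{1+\gamma}$-limit $\lim_{s\to\infty}X_s(P^\alpha_{t-s}f_\mathrm l)$ (with $P^\alpha_{t-s}\phi_p:=e^{(\alpha-|p|b)(t-s)}\phi_p$ on the finite-dimensional subspace containing $f_\mathrm l$) so that the exponent in $\exp(i\theta\cdot\widetilde S(t))$ becomes a functional of the superprocess only. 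The Markov property at time $t$, combined with the analytic continuation of the cumulant semigroup $\mathbb E_\nu[e^{-X_u(g)}]=e^{-\nu(V_u g)}$ to complex initial data (justified as in \cite{RenSongSunZhao2019Stable}), gives
\[
\mathbb E_\mu\big[\exp(i\theta\cdot\widetilde S(t))\big]=\lim_{s\to\infty}\mathbb E_\mu\big[\exp\big(iX_t(h_t)-X_t\big(V_{s-t}(i\theta_3 e^{-\alpha t(1-\tilde\beta)}P^\alpha_{t-s}f_\mathrm l)\big)\big)\big],
\]
in which, as $s\to\infty$, the linear part of $V_{s-t}$ cancels the $f_\mathrm l$-contribution to $h_t$ (absorbing the centering) while the nonlinear part supplies a complex-valued functional of $X_t$ providing the characteristic exponent of the stable limit for $X_t(f_\mathrm l)-\mathrm x_t(f_\mathrm l)$.

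Fix $r>0$ large and condition on $\mathcal F_r$. By the branching property the resulting expression takes the form $\mathbb E_\mu\big[\exp(-X_r(U^{t,r}_{\theta}))\big]$ for a complex function $U^{t,r}_{\theta}$ satisfying the integral equation arising from the $V$-equation together with the $s\to\infty$ procedure above. Substituting the expansion $\psi(z)+\alpha z=\eta|\Gamma(-1-\beta)|^{-1}(-z)^{1+\beta}+o(|z|^{1+\beta})$ from Assumption \ref{asp: branching mechanism} and following the analysis of \cite{RenSongSunZhao2019Stable}, the linear part of $\psi$ produces the $-i\theta_0 H_r$ contribution, while the nonlinear part produces space-time integrals of the form $\int_0^{t-r}\mathrm du\int(T_{t-r-u}h_t)^{1+\beta}\varphi$. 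Decomposing $h_t$ into its three regime components and expanding the nonlinearity, the diagonal regime-by-regime contributions give exactly $m[\theta_1 f_\mathrm s]$, $m[\theta_2 f_\mathrm c]$ and $m[-\theta_3 f_\mathrm l]$ via \eqref{eq:M.13}, while the cross terms mixing different regimes vanish thanks to the distinct temporal scalings of the three coefficients of $h_t$ and the $\varphi$-orthogonality of the $\phi_p$ with different indices. Letting $r\to\infty$ yields $H_r\to\widetilde H_\infty$ $\widetilde{\mathbb P}_\mu$-a.s., and the joint characteristic function converges to $\mathbb E[\exp(i\theta_0\widetilde H_\infty+\widetilde H_\infty^{1-\tilde\beta}(m[\theta_1 f_\mathrm s]+m[\theta_2 f_\mathrm c]+m[-\theta_3 f_\mathrm l]))]$, which after conditioning on $\widetilde H_\infty$ factorizes into the product of four characteristic functions, proving simultaneously the joint convergence and the independence.

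The hardest step is verifying that cross contributions in the expansion of $(T_{t-r-u}h_t)^{1+\beta}$ involving mixtures of the three regimes $\mathcal C_\mathrm s,\mathcal C_\mathrm c,\mathcal C_\mathrm l$ truly vanish in the limit. Because $z\mapsto z^{1+\beta}$ is not polynomial, these cross terms do not decouple algebraically; instead one must show that they are $o(1)$ by combining refined estimates in the spirit of \cite{RenSongSunZhao2019Stable} with a careful partition of the integration interval $[0,t-r]$ into sub-intervals on which a single regime's scaling dominates.
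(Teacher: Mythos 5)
Your plan identifies the right target (factorization of the joint characteristic function), but the step you yourself flag as hardest --- the vanishing of the cross terms in $(T_{t-r-u}h_t)^{1+\beta}$ --- is precisely where the whole proof lives, and the two mechanisms you offer for it do not work. The $\varphi$-orthogonality of the $\phi_p$ is of no help for the non-quadratic map $z\mapsto z^{1+\beta}$ (as you concede a few lines later), and ``distinct temporal scalings'' cannot separate the $\mathcal C_\mathrm s$-part from the $\mathcal C_\mathrm l$-part: in a single-time log-Laplace computation both are fed through the same nonlinear functional over overlapping time windows, whereas the fluctuation $X_t(f_\mathrm l)-\mathrm x_t(f_\mathrm l)$ is generated by the process \emph{after} time $t$ and $X_t(f_\mathrm s)$ by the process \emph{before} time $t$; it is this temporal separation, not a size estimate or a partition of $[0,t-r]$, that produces independence. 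In addition, your appeal to an analytic continuation of $V_t$ to complex test functions is a nontrivial unproved assertion: for branching mechanisms satisfying only Assumptions \ref{asp: Greys condition} and \ref{asp: branching mechanism} the equation for $V_tg$ with complex $g$ need not be solvable, and \cite{RenSongSunZhao2019Stable} deliberately avoids this by working with unit-time increments $\Upsilon^f_t$ and their characteristic functions rather than with a complex cumulant semigroup. (A smaller slip: by the $(1+\beta)$-homogeneity of $m[\cdot]$, the limit law of your deterministically normalized vector should carry the factor $\widetilde H_\infty^{(1-\tilde\beta)(1+\beta)}=\widetilde H_\infty$, not $\widetilde H_\infty^{1-\tilde\beta}$, in front of the $m$-terms.)

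The paper decouples the regimes structurally rather than by estimating a single nonlinear functional. The three stable components are first replaced by sums of unit-time increments, $\sum_k\Upsilon^{T_k\tilde f_\mathrm s}_{t-k-1}$ and $t^{\tilde\beta-1}\sum_k\Upsilon^{T_k\tilde f_\mathrm c}_{t-k-1}$ over times before $t$, and $\sum_k\Upsilon^{-T_k\tilde f_\mathrm l}_{t+k}$ over times after $t$. Proposition \ref{thm:Key} shows that the conditional characteristic function of each increment given the past is deterministic up to an error $Ce^{-\delta t}$, so a telescoping iterated-conditioning argument (Proposition \ref{cor:MI}) factorizes the joint characteristic function with a summable total error. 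The only genuine cross term is between $f_\mathrm s$ and $f_\mathrm c$, which share the same time window; it is killed by the elementary inequality of Lemma \ref{ineq: analysis} combined with the decay $|T_uf_\mathrm s|\le e^{-\delta u}h$ and the extra factor $t^{\tilde\beta-1}$ --- not by orthogonality. Independence from $\widetilde H_\infty$ is then obtained by a separate two-time-scale argument: freeze $e^{-\alpha t}\|X_t\|$, let the other three components run to time $t+r$, apply the Markov property at $t$ and Theorem \ref{thm: II} under $\mathbb P_{X_t}$, and close the gap between $H_t$ and $H_\infty$ in the bounded-Lipschitz metric via the $L^{1+\gamma}$ martingale convergence. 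To salvage your route you would need a quantitative ``the conditional characteristic exponent is asymptotically deterministic'' statement playing the role of Proposition \ref{thm:Key}; without it the factorization is asserted, not proved.
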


	As a corollary of this theorem, we get  central limit theorems for $X_t(f)$ for all $f\in \mathcal P$.

\begin{cor}
	Let $\mu\in \mathcal M_c(\mathbb R^d)\setminus \{0\}$ and $f\in \mathcal P$.
	Let  $f=f_\mathrm s + f_\mathrm c + f_\mathrm l$ be the unique decomposition of $f$ with $f_\mathrm s \in \mathcal C_\mathrm s$, $f_\mathrm c \in \mathcal C_\mathrm c$ and $f_\mathrm l \in \mathcal C_\mathrm l$.
	Then under $\widetilde {\mathbb{P}}_{\mu}$, it holds that
\begin{enumerate}
\item
	if $f_\mathrm c\equiv 0$, then
\[
    \frac{ X_t(f) - \mathrm x_t(f)}{\|X_t\|^{1-\tilde \beta}}
    \xrightarrow[t\to \infty]{d} \zeta^{f_\mathrm s}+\zeta^{-f_\mathrm l},
\]
	where $\zeta^{f_\mathrm s}$ and $\zeta^{-f_\mathrm l}$ are the $(1+\beta)$-stable random variables described in \eqref{eq:I:R:3}, $\zeta^{f_\mathrm s}$ and $\zeta^{-f_\mathrm l}$ are independent;
\item
if $f_\mathrm c\not\equiv 0$, then
\[
    \frac{ X_t(f) - \mathrm x_t(f)}{\|tX_t\|^{1-\tilde \beta}}
    \xrightarrow[t\to \infty]{d}\zeta^{f_\mathrm c}.
\]
	where $\zeta^{f_\mathrm c}$ is the $(1+\beta)$-stable random variable described in \eqref{eq:I:R:3}.
\end{enumerate}
\end{cor}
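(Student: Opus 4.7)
The plan is to deduce the corollary directly from Theorem \ref{thm:M} using linearity of $X_t$, orthogonality of the eigenbasis $\{\phi_p\}$, and Slutsky's theorem; no further analysis of the superprocess is needed.

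First I would unpack the deterministic centering $\mathrm x_t$. Since $\mathrm x_t$ is the linear functional $f \mapsto \sum_{p : \alpha\tilde\beta > |p|b} \langle f, \phi_p\rangle_\varphi e^{(\alpha - |p|b)t} H^p_\infty$ and $\{\phi_p : p \in \mathbb Z_+^d\}$ is an orthonormal basis of $L^2(\varphi)$, every $\phi_p$ appearing in this sum is orthogonal to $\mathcal C_\mathrm c$ (spanned by eigenfunctions with $\alpha\tilde\beta = |p|b$) and to $\mathcal C_\mathrm s$ (the $L^2(\varphi)$-closed span of eigenfunctions with $\alpha\tilde\beta < |p|b$; the orthogonality extends to the closure by continuity of $\langle\cdot,\phi_p\rangle_\varphi$ on $L^2(\varphi)$). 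Hence $\mathrm x_t(f_\mathrm s) = \mathrm x_t(f_\mathrm c) = 0$, and by linearity of $X_t$ and $\mathrm x_t$,
\[
X_t(f) - \mathrm x_t(f) = X_t(f_\mathrm s) + X_t(f_\mathrm c) + \bigl( X_t(f_\mathrm l) - \mathrm x_t(f_\mathrm l) \bigr).
\]

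In case (1), $f_\mathrm c \equiv 0$, so dividing this identity by $\|X_t\|^{1 - \tilde\beta}$ gives
\[
\frac{X_t(f) - \mathrm x_t(f)}{\|X_t\|^{1 - \tilde\beta}} = \frac{X_t(f_\mathrm s)}{\|X_t\|^{1 - \tilde\beta}} + \frac{X_t(f_\mathrm l) - \mathrm x_t(f_\mathrm l)}{\|X_t\|^{1 - \tilde\beta}}.
\]
Projecting the four-dimensional joint convergence from Theorem \ref{thm:M} onto its second and fourth coordinates, the two summands on the right converge jointly under $\widetilde{\mathbb P}_\mu$ to $(\zeta^{f_\mathrm s}, \zeta^{-f_\mathrm l})$ with independent coordinates; the continuous mapping theorem applied to addition then delivers the limit $\zeta^{f_\mathrm s} + \zeta^{-f_\mathrm l}$ with $\zeta^{f_\mathrm s}$ independent of $\zeta^{-f_\mathrm l}$, as required.

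In case (2), $f_\mathrm c \not\equiv 0$, so I would instead divide by $\|tX_t\|^{1 - \tilde\beta} = t^{1-\tilde\beta}\|X_t\|^{1 - \tilde\beta}$, obtaining
\[
\frac{X_t(f) - \mathrm x_t(f)}{\|tX_t\|^{1 - \tilde\beta}} = t^{-(1-\tilde\beta)}\frac{X_t(f_\mathrm s)}{\|X_t\|^{1 - \tilde\beta}} + \frac{X_t(f_\mathrm c)}{\|tX_t\|^{1 - \tilde\beta}} + t^{-(1-\tilde\beta)}\frac{X_t(f_\mathrm l) - \mathrm x_t(f_\mathrm l)}{\|X_t\|^{1 - \tilde\beta}}.
\]
By Theorem \ref{thm:M} the middle term converges in distribution to $\zeta^{f_\mathrm c}$; the ratios in the first and third terms are each tight (they individually converge in distribution by Theorem \ref{thm:M}); and since $1 - \tilde\beta = 1/(1+\beta) > 0$, the prefactors $t^{-(1-\tilde\beta)}$ tend to $0$. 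Therefore the first and third terms tend to $0$ in probability, and Slutsky's theorem yields the stated limit $\zeta^{f_\mathrm c}$. The only substantive bookkeeping is the vanishing identity $\mathrm x_t(f_\mathrm s) = \mathrm x_t(f_\mathrm c) = 0$; everything else is an immediate consequence of Theorem \ref{thm:M}.
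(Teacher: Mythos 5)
Your proposal is correct and follows exactly the route the paper intends: the corollary is stated as an immediate consequence of Theorem \ref{thm:M} (the paper gives no separate proof), and the deduction is precisely your combination of the identity $\mathrm x_t(f)=\mathrm x_t(f_\mathrm l)$, the joint convergence with independent limits from Theorem \ref{thm:M}, the continuous mapping theorem for the sum in case (1), and Slutsky's theorem with the vanishing prefactor $t^{-(1-\tilde\beta)}$ in case (2). Nothing is missing.
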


\section{Proof of main result}
\label{proofs of main results}
	We first make some preparations before proving Theorem \ref{thm:M}.
	For every $t\geq 0$ and $f\in \mathcal P$, define
\[
  	Z_t f
  	:= \int_0^t P^\alpha_{t-s}\big( \eta (-i P^\alpha_sf)^{1+\beta}\big)\mathrm ds,
  	\quad \Upsilon^f_t
   	:= \frac{X_{t+1} (f) - X_t(P_1^\alpha f)}{\| X_t\|^{1-\tilde \beta}}.
\]
	Form \cite[Theorem 3.4]{RenSongSunZhao2019Stable} we know that, for each $f\in \mathcal P$, $\langle Z_1f,\varphi\rangle$ is the characteristic exponent of 
the weak limit of $\Upsilon^f_t$.
	For $g = g_\mathrm s + g_\mathrm c + g_\mathrm l\in \mathcal P$ with $g_\mathrm s \in \mathcal C_\mathrm s, g_\mathrm c \in \mathcal C_\mathrm c$ and $g_\mathrm 1 \in \mathcal C_\mathrm l$, we define $\overline {\mathcal P}_g:= \{\theta_\mathrm s T_n g_\mathrm s +  \theta_\mathrm c T_n g_\mathrm c + \theta_\mathrm 1 T_n g_\mathrm 1: n \in \mathbb Z_+, \theta_\mathrm s, \theta_\mathrm c, \theta_\mathrm l \in [-1,1]\}$, where $T_n$ is the operator defined in \eqref{e:Tt}.
	The following Lemma \ref{lem:P:R} can be proved using 
an argument similar to that used in the proof of 
	\cite[Lemma 2.9]{RenSongSunZhao2019Stable}. We omit the details here.
\begin{lem}
\label{lem:P:R}
	For any $g\in \mathcal P$ there exists $h \in \mathcal P^+$ such that for all $ f \in \overline {\mathcal P}_g$ and $t\geq 0$, we have $ | P_t (Z_1 f - \langle Z_1 f, \varphi \rangle )| \leq e^{-bt} h$.
\end{lem}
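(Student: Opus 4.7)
The plan is to adapt the argument behind \cite[Lemma 2.9]{RenSongSunZhao2019Stable}: combine the spectral gap of the OU semigroup (the generator $L$ has spectrum $\{-bk: k\in\mathbb Z_+\}$, so $P_t$ contracts the $L^2(\varphi)$-complement of constants at rate $e^{-bt}$) with a uniform polynomial-growth envelope for the family $\{Z_1f : f\in\overline{\mathcal P}_g\}$, and then lift the $L^2$-decay to the pointwise estimate stated.

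First I would produce a single dominator for the parameter set. Since each coefficient $e^{-\big||p|b-\alpha\tilde\beta\big|n}$ in the definition \eqref{e:Tt} of $T_n$ has modulus at most $1$, and $|\theta_\mathrm s|,|\theta_\mathrm c|,|\theta_\mathrm l|\leq 1$, one bounds $|f(x)|\leq \sum_p |\langle g_\mathrm s+g_\mathrm c+g_\mathrm l,\phi_p\rangle_\varphi||\phi_p(x)|$ for every $f\in\overline{\mathcal P}_g$. Because $\mathcal C_\mathrm c$ and $\mathcal C_\mathrm l$ are finite-dimensional, and $g_\mathrm s\in\mathcal C_\mathrm s$ admits a controllable Hermite expansion with polynomial-growth bounds on each $\phi_p$, this upper bound is itself a function $h_0\in\mathcal P^+$ depending only on $g$. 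Propagating through the definition of $Z_1 f$ is then straightforward: $|P^\alpha_s f|\leq P^\alpha_s h_0$ and $|(-iz)^{1+\beta}|=|z|^{1+\beta}$ give $|(-iP^\alpha_s f)^{1+\beta}|\leq (P^\alpha_s h_0)^{1+\beta}$, and the outer $P^\alpha_{1-s}$ followed by integration over $s\in[0,1]$ yields $H\in\mathcal P^+$ with $|Z_1 f|\leq H$ for all $f\in\overline{\mathcal P}_g$.

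For the pointwise decay, I would use Mehler's representation of the OU semigroup:
\begin{equation*}
P_t\psi(x)-\langle\psi,\varphi\rangle = \mathbb E\big[\psi(e^{-bt}x+\sqrt{v_t}\,Y)-\psi(\sqrt{v_\infty}\,Y)\big],
\end{equation*}
where $Y$ is standard Gaussian, $v_t=\tfrac{\sigma^2}{2b}(1-e^{-2bt})$, and $v_\infty=\tfrac{\sigma^2}{2b}$. Both $e^{-bt}x$ and $\sqrt{v_\infty}-\sqrt{v_t}$ are of order $e^{-bt}$, so a first-order mean-value expansion of $\psi=Z_1 f$ bounds the left-hand side by $e^{-bt}$ times a polynomial-growth function of $x$, provided $|\nabla Z_1 f|$ itself admits a polynomial-growth bound uniform in $f\in\overline{\mathcal P}_g$.

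The main obstacle is exactly this uniform gradient control: since $z\mapsto(-iz)^{1+\beta}$ is only H\"older at $z=0$, differentiation cannot be pushed through $(-iP^\alpha_s f)^{1+\beta}$ directly. Following \cite[Lemma 2.9]{RenSongSunZhao2019Stable}, I would instead shift the $x$-derivative onto the Gaussian kernel $k_{1-s}$ of $P^\alpha_{1-s}$ for $s<1$:
\begin{equation*}
\nabla_x P^\alpha_{1-s}u(x) = \int \nabla_x k_{1-s}(x,y)\,u(y)\,\mathrm dy.
\end{equation*}
The quantity $|\nabla_x k_{1-s}(x,y)|$ has an integrable-in-$s$ singularity near $s=1$ (of order $(1-s)^{-1/2}$) together with polynomial-in-$|x|,|y|$ prefactors; bounding $|u|=|(-iP^\alpha_sf)^{1+\beta}|\leq (P^\alpha_s h_0)^{1+\beta}$ and integrating over $s\in(0,1)$ delivers the required uniform polynomial bound on $|\nabla Z_1 f|$, which combined with the Mehler step above closes the proof.
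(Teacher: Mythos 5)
Your overall architecture---a single polynomial-growth envelope for the family $\{Z_1 f: f\in\overline{\mathcal P}_g\}$ followed by a Mehler-formula estimate extracting the rate $e^{-bt}$---is the right one and matches the argument the paper is pointing to in \cite[Lemma 2.9]{RenSongSunZhao2019Stable}. But the first step does not hold as written. You bound $|f|$ for $f\in\overline{\mathcal P}_g$ by $\sum_p|\langle g_\mathrm s+g_\mathrm c+g_\mathrm l,\phi_p\rangle_\varphi|\,|\phi_p|$ and assert this is in $\mathcal P^+$. For the $\mathcal C_\mathrm c$ and $\mathcal C_\mathrm l$ components this is fine (finite sums), but $\mathcal C_\mathrm s$ is infinite dimensional: $g_\mathrm s$ is only a Borel function of polynomial growth lying in $\overline{\operatorname{Span}}\{\phi_p:\alpha\tilde\beta<|p|b\}$, so its coefficients $\langle g_\mathrm s,\phi_p\rangle_\varphi$ are merely square-summable, and the absolutely summed Hermite series $\sum_p|\langle g_\mathrm s,\phi_p\rangle_\varphi|\,|\phi_p(x)|$ need not converge, let alone define a function of polynomial growth. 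The correct route is to use that the low modes of $g_\mathrm s$ vanish to write $T_n g_\mathrm s=e^{\alpha\tilde\beta n}P_n g_\mathrm s$ and invoke the estimate from Step 1 of the proof of \cite[Lemma 2.6]{RenSongSunZhao2019Stable}, which gives $|T_n g_\mathrm s|\leq e^{-\delta n}h$ for some $\delta>0$ and $h\in\mathcal P^+$ depending only on $g_\mathrm s$; this is exactly the fact the present paper invokes in the proof of Theorem \ref{thm: II}. With that substitution the envelope $h_0$ exists and the rest of your argument closes.

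The remaining steps are sound: $|Z_1 f|\leq H\in\mathcal P^+$ uniformly, the kernel-differentiation bound with its integrable $(1-s)^{-1/2}$ singularity, and the mean-value estimate $|P_t\psi(x)-\langle\psi,\varphi\rangle|\leq e^{-bt}\cdot(\text{polynomial in }x)$ all go through. Two remarks. First, the obstacle you flag is milder than you state: $z\mapsto(-iz)^{1+\beta}$ is actually $C^1$ on $\mathbb R$ with derivative $-i(1+\beta)(-iz)^{\beta}=O(|z|^{\beta})$; the kernel-differentiation detour is still needed, but only because $f$ itself need not be differentiable. Second, you can avoid controlling $\nabla Z_1f$ entirely: once $|Z_1f|\leq H$ uniformly, writing $P_t\psi(x)-\langle\psi,\varphi\rangle=\int \psi(y)\big(k_t(x,y)-\varphi(y)\big)\mathrm dy$ with $k_t$ the OU transition density and estimating the Gaussian density difference directly (the mechanism behind \cite[Lemma 2.9]{RenSongSunZhao2019Stable}) gives $\int H(y)|k_t(x,y)-\varphi(y)|\mathrm dy\leq e^{-bt}h(x)$ for $t$ bounded away from $0$, with the small-$t$ range handled trivially by $|P_t\psi|+|\langle\psi,\varphi\rangle|\leq P_tH+\langle H,\varphi\rangle$; uniformity in $f$ is then automatic because the bound depends on $\psi=Z_1f$ only through the dominator $H$.
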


	The following result is a generalization of \cite[Proposition 3.5]{RenSongSunZhao2019Stable}, whose proof is similar to that of \cite[Proposition 3.5]{RenSongSunZhao2019Stable},
with Lemma \ref{lem:P:R} replacing the role of \cite[Lemma 2.9]{RenSongSunZhao2019Stable}.
	Let $(\mathscr F_t)_{t\geq 0}$ be the natural filtration of $X$.
	\begin{prop}
\label{thm:Key}
	For any $\mu \in \mathcal M_c(\mathbb R^d)$  and $g \in \mathcal P$, there exist $C,\delta>0$ such that for all $t\geq 1$ and $f \in \overline{\mathcal P}_g$, we have
\[
	\mathbb P_\mu
	\Big[ \big|\mathbb P_\mu [e^{i\Upsilon^f_t} - e^{\langle Z_1f, \varphi\rangle}; D^c | \mathscr F_t ]  \big|\Big]
	\leq C e^{- \delta t}.
\]
\end{prop}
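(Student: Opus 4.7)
\emph{Proof plan.} The approach is to follow the proof of \cite[Proposition 3.5]{RenSongSunZhao2019Stable}, with Lemma~\ref{lem:P:R} playing the role of \cite[Lemma 2.9]{RenSongSunZhao2019Stable} wherever uniform control over the family $\overline{\mathcal P}_g$ is required.

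First, conditioning on $\mathscr F_t$ and invoking the Markov property together with the Laplace-functional identity~\eqref{eq: def of V_t} analytically extended to imaginary arguments, one rewrites the inner conditional expectation as a deterministic function of $X_t$. Writing $\theta_t := \|X_t\|^{-(1-\tilde\beta)}$, this gives
\[
\mathbb P_\mu[e^{i\Upsilon_t^f}\mid \mathscr F_t]
= \exp\Bigl(-X_t\bigl(V_1(-i\theta_t f) + i\theta_t P_1^\alpha f\bigr)\Bigr),
\]
while a standard computation based on the spatial homogeneity of $\psi$ yields $\mathbb P_\mu[D^c\mid \mathscr F_t] = 1 - e^{-v^\ast \|X_t\|}$ with $v^\ast > 0$ the unique positive root of $\psi$; on $D^c$ this tends to $1$ exponentially fast in $\|X_t\|$. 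Thus matters reduce to controlling the exponent $X_t\bigl(V_1(-i\theta_t f) + i\theta_t P_1^\alpha f\bigr)$.

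Second, I would Taylor-expand $V_1(-i\theta f) + i\theta P_1^\alpha f$ as $\theta\to 0$ using the mild form $V_t g = P_t^\alpha g - \int_0^t P_{t-s}^\alpha(\psi_0(V_s g))\,\mathrm ds$ with $\psi_0(z) := \psi(z)+\alpha z$. Under Assumption~\ref{asp: branching mechanism} one has $\psi_0(z) \sim \eta z^{1+\beta}$ as $z\to 0$, and a single Picard iteration gives
\[
V_1(-i\theta f)(x) + i\theta P_1^\alpha f(x)
= -\theta^{1+\beta} Z_1 f(x) + R(\theta,f)(x),
\]
with a remainder $R$ bounded pointwise by $o(\theta^{1+\beta})$ times a function in $\mathcal P$ independent of $f\in\overline{\mathcal P}_g$. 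The identity $(1+\beta)(1-\tilde\beta)=1$ then gives $\theta_t^{1+\beta}=\|X_t\|^{-1}$, so the leading contribution collapses to $X_t(Z_1 f)/\|X_t\|$. Lemma~\ref{lem:P:R} supplies $|P_t(Z_1 f - \langle Z_1 f,\varphi\rangle)|\leq e^{-bt}h$ uniformly in $f\in\overline{\mathcal P}_g$; combined with the first-moment formula $\mathbb P_\mu[X_t(\cdot)]=\mu(P_t^\alpha\cdot)$ and the $L^{1+\gamma}$ bounds on the martingales $(H_t^p)_{t\geq 0}$ recalled in the introduction, this yields exponentially fast $L^1(\mathbb P_\mu)$-convergence of $X_t(Z_1 f)/\|X_t\|$ to $\langle Z_1 f,\varphi\rangle$ on $D^c$, uniformly in $f$.

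Plugging these pieces into the inequality $|e^a-e^b|\leq |a-b|(e^{\operatorname{Re}a}+e^{\operatorname{Re}b})$, and using that $\operatorname{Re}\langle Z_1 f,\varphi\rangle\leq 0$ (since $\langle Z_1 f,\varphi\rangle$ is the characteristic exponent of a $(1+\beta)$-stable law) then produces the target bound $C e^{-\delta t}$. The main obstacle I foresee is the \emph{uniformity} over the infinite family $\overline{\mathcal P}_g$ at every step: the Taylor remainder, the mean and moment estimates for $X_t$, and the ergodic-type convergence must all admit bounds with a single rate $\delta>0$. Lemma~\ref{lem:P:R} is precisely tailored to furnish the universal dominating function required for the ergodic step, while the spectral description of $T_n$ in~\eqref{e:Tt} keeps $P_s^\alpha f$ polynomially bounded uniformly over $\overline{\mathcal P}_g$, together delivering the required uniform estimate.
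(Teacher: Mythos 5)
Your proposal follows exactly the route the paper intends: it declares the proof to be that of \cite[Proposition 3.5]{RenSongSunZhao2019Stable} with Lemma~\ref{lem:P:R} substituted for \cite[Lemma 2.9]{RenSongSunZhao2019Stable} to get uniformity over $\overline{\mathcal P}_g$, and your sketch (conditioning via the complex-extended Laplace functional, the expansion $V_1(-i\theta f)+i\theta P_1^\alpha f=-\theta^{1+\beta}Z_1f+o(\theta^{1+\beta})$, the identity $(1+\beta)(1-\tilde\beta)=1$, and the ergodic step driven by Lemma~\ref{lem:P:R}) reproduces that argument faithfully. This is essentially the same approach and is correct at the level of detail the paper itself provides.
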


	The following generalization of \cite[Proposition 3.5]{RenSongSunZhao2019Stable} will be used later in the proof of  Theorem \ref{thm: II}, a special case of Theorem \ref{thm:M}.
	Note that the constants $C$ and $\delta$ in the next result depend only on $f,g\in \mathcal P$ and $\mu\in \mathcal M_c(\mathbb R^d)$, do not depend on $n_1, n_2, f_j, g_j$ and $t$ (as long as $t\geq n_1+1$).

\begin{prop} \label{cor:MI}
	For any  $f,g\in \mathcal P$ and $\mu\in \mathcal M_c(\mathbb R^d)$, there exist $C,\delta>0$ such that for all $n_1,n_2 \in \mathbb Z_+$, $(f_j)_{j=0}^{n_1}\subset \overline{\mathcal P}_f$, $(g_j)_{j=0}^{n_2}\subset \overline{\mathcal P}_g$ and $t\geq n_1+1$, we have
\begin{equation}
\label{32corollary}
	\Big|\mathbb{\widetilde{P}}_{\mu}\Big[  \Big(\prod_{k=0}^{n_1}e^{i \Upsilon^{f_k}_{t-k-1}}\Big)  \Big( \prod_{k=0}^{n_2}e^{i \Upsilon^{g_k}_{t+k} } \Big) \Big]  -  \Big(\prod_{k=0}^{n_1} e^{\langle Z_1f_k, \varphi\rangle}\Big) \Big(\prod_{k=0}^{n_2} e^{\langle Z_1g_k, \varphi\rangle}\Big) \Big|
	\leq C e^{-\delta (t-n_1)}.
\end{equation}
\end{prop}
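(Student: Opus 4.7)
The plan is to rewrite the product in \eqref{32corollary} as a single chronologically ordered chain of unit-length increments, and then peel off one factor at a time via the Markov property, invoking Proposition \ref{thm:Key} at each step.

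\emph{Step 1 (unification).} I would set $N := n_1 + n_2 + 1$, $s_j := t - n_1 - 1 + j$ for $0 \leq j \leq N$, and
\[
  \phi_j := f_{n_1 - j} \text{ for } 0 \leq j \leq n_1, \qquad \phi_j := g_{j - n_1 - 1} \text{ for } n_1 < j \leq N.
\]
The hypothesis $t \geq n_1 + 1$ forces $s_0 \geq 0$, and $s_{j+1} = s_j + 1$. Writing $F_k := \prod_{j=0}^{k-1} e^{i \Upsilon^{\phi_j}_{s_j}}$, the first product in \eqref{32corollary} coincides with $F_{N+1}$. Since $\Upsilon^{\phi_j}_{s_j}$ is a function of $X_{s_j}$ and $X_{s_{j+1}}$, $F_k$ is $\mathscr F_{s_k}$-measurable with $|F_k| \leq 1$, and every $\phi_j$ lies in $\overline{\mathcal P}_f \cup \overline{\mathcal P}_g$. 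Applying Proposition \ref{thm:Key} twice (once with the ambient function taken to be $f$, once taken to be $g$) and choosing the worse of the two pairs of constants produces a single pair $C, \delta > 0$, depending only on $f, g, \mu$, such that
\[
  \mathbb P_\mu \bigl[ \bigl| \mathbb P_\mu[e^{i \Upsilon^{\phi_j}_{s_j}} - e^{\langle Z_1 \phi_j, \varphi \rangle}; D^c | \mathscr F_{s_j}] \bigr| \bigr] \leq C e^{-\delta s_j}, \qquad j = 0, \dots, N.
\]

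\emph{Step 2 (telescoping).} Writing $\lambda_j := e^{\langle Z_1 \phi_j, \varphi \rangle}$, we have $|\lambda_j| \leq 1$ because $\lambda_j$ is the value of a characteristic function. For each $k = 0, 1, \dots, N$, the identity $F_{k+1} = F_k \cdot e^{i \Upsilon^{\phi_k}_{s_k}}$ combined with the $\mathscr F_{s_k}$-measurability of $F_k$ and the tower property yields
\[
  \bigl| \mathbb P_\mu[F_{k+1}; D^c] - \lambda_k \mathbb P_\mu[F_k; D^c] \bigr| \leq \mathbb P_\mu \bigl[ \bigl| \mathbb P_\mu[e^{i \Upsilon^{\phi_k}_{s_k}} - \lambda_k; D^c | \mathscr F_{s_k}] \bigr| \bigr] \leq C e^{-\delta s_k}.
\]
Chaining these $N + 1$ inequalities (using $|\lambda_j| \leq 1$ to keep the accumulated telescoping prefactors bounded) and noting $F_0 \equiv 1$ produces
\[
  \Bigl| \mathbb P_\mu[F_{N+1}; D^c] - \Bigl( \prod_{j=0}^{N} \lambda_j \Bigr) \mathbb P_\mu[D^c] \Bigr| \leq C \sum_{k=0}^{N} e^{-\delta s_k} \leq \frac{C e^{\delta}}{1 - e^{-\delta}} \, e^{-\delta(t - n_1)}.
\]
Finally I would divide by $\mathbb P_\mu[D^c] > 0$ (which depends only on $\mu$) to convert this bound into the desired estimate for $\widetilde{\mathbb P}_\mu$ with adjusted constants still depending only on $f, g, \mu$.

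I do not expect a serious obstacle: once Proposition \ref{thm:Key} is available, the argument is purely a clean iteration. The two subtleties worth checking carefully are, first, that the constants $C, \delta$ furnished by Proposition \ref{thm:Key} really are uniform over the whole families $\overline{\mathcal P}_f$ and $\overline{\mathcal P}_g$ (so that a single pair works for every $\phi_j$ simultaneously, independent of $n_1, n_2$); and second, that the geometric sum $\sum_{k=0}^{N} e^{-\delta s_k}$ collapses to $O(e^{-\delta s_0}) = O(e^{-\delta(t - n_1)})$, yielding the announced rate that is free of dependence on $N$. The assumption $t \geq n_1 + 1$ is used precisely to ensure $s_0 \geq 0$ so that every conditioning step takes place at a nonnegative time.
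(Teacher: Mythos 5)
Your proposal is correct and is essentially the paper's own argument: the paper likewise replaces one factor $e^{i\Upsilon}$ at a time by its constant $e^{\langle Z_1\cdot,\varphi\rangle}$, bounds each swap by the conditional-expectation estimate of Proposition \ref{thm:Key} (using that the remaining random factors are measurable with respect to the conditioning $\sigma$-field and bounded by $1$), and sums the resulting geometric series, which is dominated by its earliest-time term $e^{-\delta(t-n_1)}$. The only difference is bookkeeping — you merge the two products into a single chronological chain, whereas the paper telescopes a two-index array $a_{k_1,k_2}$ in two separate passes — and the two subtleties you flag (uniformity of the constants over $\overline{\mathcal P}_f\cup\overline{\mathcal P}_g$, and independence of the bound from $n_1,n_2$) are exactly the points the paper's proof takes care of.
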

\begin{proof}
	In this proof, we fix $f,g\in \mathcal P$, $\mu\in \mathcal M_c(\mathbb R^d)$, $n_1,n_2 \in \mathbb Z_+$, $(f_j)_{j=0}^{n_1}\subset \overline{\mathcal P}_f$, $(g_j)_{j=0}^{n_2}\subset \overline{\mathcal P}_g$ and $t\geq n_1 + 1$.
 	For any $k_1 \in \{-1,0,\dots,n_1\}$ and $k_2 \in \{-1,0,\dots,n_2\}$,  define
\[
    a_{k_1,k_2}
    :=  \mathbb{\widetilde{P}}_{\mu}\Big[ \Big(\prod_{j=k_1+1}^{n_1} e^{i\Upsilon_{t-j-1}^{f_j}} \Big)  \Big(\prod_{j=0}^{k_2}e^{i\Upsilon_{t+j}^{g_j}}\Big) \Big] \Big(\prod_{j=0}^{k_1}e^{\langle Z_1 f_j, \varphi\rangle}\Big) \Big(\prod_{j=k_2+1}^{n_2} e^{ \langle Z_1g_j,\varphi \rangle} \Big),
\]
	where we used the convention that $\prod_{j=0}^{-1} =1.$
  	Then for all  $k_2 \in \{0,\dots,n_2\}$, we have
\begin{equation}
\label{eq:MI.1}
\begin{multlined}
	 a_{-1,k_2} - a_{-1,k_2-1}
	 = \frac{1}{\mathbb{P}_{\mu}(D^c)}  \Big(\prod_{j=k_2+1}^{n_2}e^{\langle Z_1g_j, \varphi\rangle}\Big) \times {}
	\\ \mathbb{P}_{\mu}\Big[\Big(\prod_{j=0}^{n_1}e^{i\Upsilon_{t-j-1}^{f_j}}\Big)\Big(\prod_{j=0}^{k_2-1} e^{i\Upsilon_{t+j}^{g_j}}\Big) \mathbb P_\mu[e^{i\Upsilon^{g_{k_2}}_{t+k_2}}-e^{\langle Z_1g_{k_2}, \varphi\rangle}; D^c|\mathscr F_{t+k_2}] \Big].
\end{multlined}
\end{equation}
	By Proposition \ref{thm:Key}, there exist $C_0,\delta_0 >0$,  depending only on $\mu$ and $g$, such that  for each $k_2 \in \{0, \dots, n_2 \}$,
\begin{align}
    &| a_{-1,k_2} - a_{-1,k_2-1}|
    \overset{\eqref{eq:MI.1}}\leq \mathbb{P}_{\mu}(D^c)^{-1}\mathbb{P}_{\mu}\Big[\big|\mathbb P_\mu[e^{i\Upsilon^{g_{k_2}}_{t+k_2}}-e^{\langle Z_1g_{k_2}, \varphi\rangle}; D^c|\mathscr F_{t+k_2}]\big|\Big]
    \\&\label{eq:MI.15} \leq C_0 e^{-\delta_0 (t+k_2)}.
\end{align}
	Similarly, for any $k_1 \in \{0, \dots , n_1\}$,
\begin{equation}
\label{eq:MI.2}
\begin{multlined}
	a_{k_1-1,-1} - a_{k_1,-1}
	=  \frac{1}{\mathbb P_\mu(D^c)} \Big(\prod_{j=0}^{k_1-1}e^{\langle Z_1 f_j, \varphi\rangle}\Big) \Big(\prod_{j=0}^{n_2} e^{ \langle Z_1g_j,\varphi \rangle} \Big) \times {}
	\\ \qquad \mathbb{P}_{\mu}\Big[ \mathbb P_\mu\big[e^{i\Upsilon_{t-k_1-1}^{f_{k_1}}} -e^{\langle Z_1 f_{k_1}, \varphi\rangle}  ; D^c \big| \mathscr F_{t-k_1 - 1}\big] \prod_{j=k_1+1}^{n_1} e^{i\Upsilon_{t-j-1}^{f_j}} \Big] .
\end{multlined}
\end{equation}
	By Proposition \ref{thm:Key}, there exist $C_1,\delta_1 >0$, depending only on $\mu$ and $f$, such that for any $k_1 \in \{0,\dots,n_1\}$,
\begin{align}
    &| a_{k_1-1,-1} - a_{k_1,-1}|
    \overset{\eqref{eq:MI.2}}\leq \frac{1}{\mathbb{P}_{\mu}(D^c)}\mathbb{P}_{\mu}\Big[\big|\mathbb P_\mu[e^{i\Upsilon^{f_{k_1}}_{t-k_1-1}}-e^{\langle Z_1f_{k_1}, \varphi\rangle}; D^c|\mathscr F_{t-k_1-1}]\big|\Big]
    \\&\label{eq:MI.25} \leq C_1 e^{-\delta_1 (t-k_1)}.
\end{align}
	Therefore, there exist $C,\delta >0$, depending only on $f,g$ and $\mu$, such that
\begin{align}
    &\text{LHS of \eqref{32corollary}}
    = \left|a_{-1,n_2}-a_{n_1,-1}\right|
    \leq \sum_{k=0}^{n_1}\left|a_{k-1,-1}-a_{k,-1}\right|+\sum_{k=0}^{n_2}\left|a_{-1,k}-a_{-1,k-1}\right|
    \\& \overset{\eqref{eq:MI.15},\eqref{eq:MI.25}}\leq \sum_{k=0}^{n_1} C_1 e^{-\delta_1 (t-k)}+\sum_{k=0}^{n_2} C_0 e^{-\delta_0 (t+k)}
  	\leq C e^{- \delta (t-n_1)}.
    \qedhere
\end{align}
\end{proof}

	The following analytic result is elementary, and will also be used in the proof of Theorem \ref{thm: II}.
	\begin{lem}
\label{ineq: analysis}
	There exists a constant $C>0$, such that for any $x,y \in \mathbb R$,
\[
    |(x+y)^{1+\beta}-x^{1+\beta}-y^{1+\beta}|
    \leq C(|x||y|^{\beta}+|x|^{\beta}|y|).
\]
\end{lem}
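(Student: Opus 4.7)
The plan is to interpret $f(x) := x^{1+\beta}$ as the odd extension $f(x) = |x|^{1+\beta}\operatorname{sgn}(x)$, which is continuously differentiable on $\mathbb R$ with $f'(x) = (1+\beta)|x|^\beta$. If the inequality is in fact meant for the complex expression $(-iz)^{1+\beta}$ that appears in \eqref{eq:M.13}, the same argument will work, because taking modulus removes the overall factor $(-i)^{1+\beta}$ and reduces the problem to the real statement.

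First I would rewrite the second difference via the fundamental theorem of calculus, arranging both remaining terms to carry the small factor $y$:
\[
f(x+y) - f(x) - f(y)
= y\int_0^1 \bigl[f'(x+ty) - f'(ty)\bigr]\, dt.
\]
The key analytic ingredient is then the subadditivity of the concave map $r \mapsto |r|^\beta$: for $\beta \in (0,1)$ one has $\bigl||a|^\beta - |b|^\beta\bigr| \leq |a-b|^\beta$ for all $a,b \in \mathbb R$. Applied with $a = x+ty$ and $b = ty$, this gives the uniform-in-$t$ bound $|f'(x+ty) - f'(ty)| \leq (1+\beta)|x|^\beta$, and integrating yields the one-sided estimate $|f(x+y) - f(x) - f(y)| \leq (1+\beta)|y||x|^\beta$.

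To finish, I would repeat the argument with the roles of $x$ and $y$ interchanged (integrating along the path pivoted at $y$ rather than at $x$) to obtain the symmetric bound $|f(x+y) - f(x) - f(y)| \leq (1+\beta)|x||y|^\beta$. Averaging the two, or simply adding them, produces the desired inequality with $C = (1+\beta)/2$ (or any convenient larger constant).

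There is no real obstacle to this strategy; the only subtlety is notational — one must commit to a single unambiguous interpretation of the fractional power when the base is negative (or complex) so that the integral representation along the straight line is legitimate. Once that choice is fixed, the proof reduces entirely to the elementary subadditivity of $r \mapsto |r|^\beta$, and no further analytic content is required.
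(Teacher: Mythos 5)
Your proof is correct, and it takes a genuinely different route from the paper's. The paper argues by homogeneity: dividing through by $\max(|x|,|y|)^{1+\beta}$ reduces the claim to the normalized inequality $|(1+y)^{1+\beta}-y^{1+\beta}-1|\leq C_1|y|^{\beta}$ for $|y|\geq 1$, which is then obtained by computing the limit of the ratio as $|y|\to\infty$ (it equals $1+\beta$) and invoking continuity on the remaining compact range; the constant is therefore non-explicit. Your argument instead represents the second difference by the fundamental theorem of calculus, $f(x+y)-f(x)-f(y)=y\int_0^1[f'(x+ty)-f'(ty)]\,\mathrm dt$ with $f'(u)=(1+\beta)|u|^{\beta}$, and bounds the integrand uniformly by the elementary subadditivity estimate $\bigl||a|^{\beta}-|b|^{\beta}\bigr|\leq|a-b|^{\beta}$. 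This buys you an explicit constant and in fact the stronger one-term bound $|f(x+y)-f(x)-f(y)|\leq(1+\beta)\min\bigl(|x|^{\beta}|y|,\,|x||y|^{\beta}\bigr)$, of which the stated inequality is an immediate consequence. The branch-of-the-power subtlety you flag is real but equally present in the paper's own proof (which also writes $(1+y)^{1+\beta}$ for $1+y$ possibly negative without comment); under the odd-extension convention your argument is airtight, and under the principal-branch convention it survives with at most an extra factor of $2$ since the derivative along the real line still satisfies $|g'(a)-g'(b)|\leq 2(1+\beta)|a-b|^{\beta}$.
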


	In the remainder of this section, we  fix $\mu \in \mathcal M_c(\mathbb R^d)\setminus\{0\}$, $f_\mathrm s\in \mathcal C_\mathrm s$,  $f_\mathrm c\in \mathcal C_\mathrm c$ and $f_\mathrm l\in \mathcal C_\mathrm l$.
	For every $t\geq 1$, define
\begin{align}
	R(t)
	&:=\Big( \frac{X_t(f_\mathrm s)}{\|X_t\|^{1-\tilde \beta}},\frac{X_t(f_\mathrm c)}{\|tX_t\|^{1-\tilde \beta}},\frac{ X_t(f_\mathrm l) - \mathrm x_t(f_\mathrm l)}{\|X_t\|^{1-\tilde \beta}}\Big),
        \\R'(t)
    &=\big(R'_\mathrm s(t), R'_\mathrm c(t), R'_\mathrm l(t)\big)
        \\& :=\Big(\sum_{k=0}^{\lfloor t-\ln t \rfloor} \Upsilon_{t-k-1}^{T_k \tilde f_\mathrm s},t^{\tilde \beta - 1}\sum_{k=0}^{\lfloor t-\ln t \rfloor} \Upsilon_{t-k-1}^{T_{k} \tilde f_\mathrm c},\sum_{k = 0}^{\lfloor t^2 \rfloor} \Upsilon_{t+k}^{- T_k \tilde f_\mathrm l}\Big),
\end{align}
where $T_k$ is the operator defined in 
\eqref{e:Tt}, $\mathrm x_t(f_\mathrm l)$ is defined in \eqref{eq:oldResult} 
with $f$ replaced with $f_\mathrm l$, $\tilde f_\mathrm s:=e^{\alpha(\tilde \beta - 1)} f_\mathrm s$, $\tilde f_\mathrm c:=e^{\alpha(\tilde \beta - 1)} f_\mathrm c$ and $\tilde f_\mathrm l := \sum_{p\in \mathbb Z^d_+: \alpha\tilde\beta>|p|b}e^{-(\alpha - |p|b)}\langle f_\mathrm l, \phi_p \rangle_\varphi \phi_p$.
	The following result is a special case of Theorem \ref{thm:M}.
\begin{thm}\label{thm: II}
	Under $\mathbb{\widetilde{P}}_{\mu}$, $R(t) \xrightarrow[t\rightarrow\infty]{d}(\zeta^{f_\mathrm s},\zeta^{f_\mathrm c},\zeta^{-f_\mathrm l})$, where $\zeta^{f_\mathrm s},\zeta^{f_\mathrm c}$ and $\zeta^{-f_\mathrm l}$ are the $(1+\beta)$-stable random variables described in \eqref{eq:I:R:3}, and $\zeta^{f_\mathrm s},\zeta^{f_\mathrm c}$ and $\zeta^{-f_\mathrm l}$ are independent.
\end{thm}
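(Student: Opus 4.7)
The plan is to approximate $R(t)$ by the telescoping sum $R'(t)$, then evaluate the joint characteristic function of $R'(t)$ via Proposition \ref{cor:MI} and Lemma \ref{ineq: analysis}, and show that it factorizes into the three independent stable pieces.

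First, I would show that $R(t) - R'(t) \to 0$ in $\widetilde{\mathbb P}_\mu$-probability on $D^c$. Iterating the recursion $X_{s+1}(f) = X_s(P_1^\alpha f) + \|X_s\|^{1-\tilde\beta}\Upsilon^f_s$ decomposes each coordinate of $R(t)$ into a telescoping sum of $\Upsilon$-increments plus a boundary remainder. Combined with the almost-sure asymptotic $\|X_s\|/\|X_t\| \to e^{-\alpha(t-s)}$ on $D^c$ and the spectral decompositions of $P_u^\alpha$ and $T_u$ on each of $\mathcal C_\mathrm s,\mathcal C_\mathrm c,\mathcal C_\mathrm l$, this identifies the $k$-th summand as $\Upsilon^{T_k\tilde f_\mathrm s}_{t-k-1}$, as $t^{\tilde\beta-1}\Upsilon^{T_k\tilde f_\mathrm c}_{t-k-1}$, or (after expanding forward in time) as $\Upsilon^{-T_k\tilde f_\mathrm l}_{t+k}$. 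The cutoffs $K_1 := \lfloor t - \ln t \rfloor$ and $K_2 := \lfloor t^2 \rfloor$ are chosen so the boundary remainders are negligible: on $\mathcal C_\mathrm s$ the contractivity of $T_{K_1}$ gives exponential decay in $K_1$; on $\mathcal C_\mathrm c$ the prefactor $t^{\tilde\beta-1}$ dominates the $O(t^{\alpha(1-\tilde\beta)})$ boundary growth of $X_{\ln t}(f_\mathrm c)$; on $\mathcal C_\mathrm l$ the subtraction of $\mathrm x_t(f_\mathrm l)$ removes the divergent leading terms using the $L^{1+\gamma}$-convergent martingales $(H^p_\cdot)$ for modes with $\alpha\tilde\beta>|p|b$, while contractivity of $T_{K_2}$ controls the residual fluctuation. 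Slutsky's theorem then reduces the claim to the convergence in distribution of $R'(t)$.

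By the linearity of $\Upsilon^f_s$ in $f$, for fixed $(\theta_\mathrm s,\theta_\mathrm c,\theta_\mathrm l)\in\mathbb R^3$ the joint characteristic function of $R'(t)$ at this point equals
\[
\widetilde{\mathbb P}_\mu\Bigl[\prod_{k=0}^{K_1} e^{i\Upsilon^{h_k}_{t-k-1}} \prod_{k=0}^{K_2} e^{i\Upsilon^{g_k}_{t+k}}\Bigr],\qquad h_k := \theta_\mathrm s T_k\tilde f_\mathrm s + \theta_\mathrm c t^{\tilde\beta-1}T_k\tilde f_\mathrm c,\ g_k := -\theta_\mathrm l T_k\tilde f_\mathrm l .
\]
After rescaling $f_\mathrm s, f_\mathrm c, f_\mathrm l$ so that $|\theta_\mathrm s|,|\theta_\mathrm l|\leq 1$ and $|\theta_\mathrm c|\,t^{\tilde\beta-1}\leq 1$ for large $t$, one has $(h_k)_{k=0}^{K_1}\subset\overline{\mathcal P}_{f_\mathrm s+f_\mathrm c}$ and $(g_k)_{k=0}^{K_2}\subset\overline{\mathcal P}_{f_\mathrm l}$. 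Proposition \ref{cor:MI} then shows that the above quantity differs from the deterministic product
\[
A(t):=\prod_{k=0}^{K_1} e^{\langle Z_1 h_k,\varphi\rangle}\prod_{k=0}^{K_2} e^{\langle Z_1 g_k,\varphi\rangle}
\]
by at most $Ce^{-\delta(t-K_1)} = Ce^{-\delta\ln t}\to 0$, so it suffices to compute $\lim_{t\to\infty} A(t)$.

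It remains to show $\log A(t) \to m[\theta_\mathrm s f_\mathrm s] + m[\theta_\mathrm c f_\mathrm c] + m[-\theta_\mathrm l f_\mathrm l]$. Applying Lemma \ref{ineq: analysis} pointwise inside the $(-iP_s^\alpha h_k)^{1+\beta}$ integrand of $Z_1 h_k$ with $u = \theta_\mathrm s T_k \tilde f_\mathrm s$ and $v = \theta_\mathrm c t^{\tilde\beta-1} T_k\tilde f_\mathrm c$ splits $\langle Z_1 h_k,\varphi\rangle$ into $\langle Z_1(\theta_\mathrm s T_k\tilde f_\mathrm s),\varphi\rangle + \langle Z_1(\theta_\mathrm c t^{\tilde\beta-1}T_k\tilde f_\mathrm c),\varphi\rangle + \mathcal E_k$, with $|\mathcal E_k|$ controlled by integrals of $|u||v|^\beta + |u|^\beta|v|$. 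Since $T_k\tilde f_\mathrm s$ decays exponentially in $k$ while $|v|$ is uniformly $O(t^{\tilde\beta-1})$, $\sum_k |\mathcal E_k|\to 0$. The pure-$\mathrm s$ sum is a Riemann-sum approximation to $m[\theta_\mathrm s f_\mathrm s] = \eta\int_0^\infty\!\int(-iT_u\theta_\mathrm s f_\mathrm s)^{1+\beta}\varphi(x)\,dx\,du$; the pure-$\mathrm c$ sum, using $T_k\tilde f_\mathrm c = \tilde f_\mathrm c$, equals $(K_1+1)\,t^{(\tilde\beta-1)(1+\beta)}\langle Z_1\theta_\mathrm c\tilde f_\mathrm c,\varphi\rangle$, and since $(\tilde\beta-1)(1+\beta) = -1$ and $(K_1+1)/t\to 1$ a direct computation using $P_s^\alpha\tilde f_\mathrm c = e^{\alpha(1-\tilde\beta)(s-1)} f_\mathrm c$ gives the limit $m[\theta_\mathrm c f_\mathrm c]$; the $g_k$-sum similarly approximates $m[-\theta_\mathrm l f_\mathrm l]$. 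Hence $A(t)\to e^{m[\theta_\mathrm s f_\mathrm s]} e^{m[\theta_\mathrm c f_\mathrm c]} e^{m[-\theta_\mathrm l f_\mathrm l]}$, which is the joint characteristic function of the independent triple $(\zeta^{f_\mathrm s},\zeta^{f_\mathrm c},\zeta^{-f_\mathrm l})$. The principal difficulty is the uniform control of the cross errors $\mathcal E_k$ over all $k\leq K_1\sim t$---a delicate balance between the exponential contractivity of $T_k$ on $\mathcal C_\mathrm s$ and the power-of-$t$ prefactor from the $\mathcal C_\mathrm c$ normalization; a secondary subtlety is that Step 1 evaluates the boundary remainder at time $\sim\ln t$, requiring the $L^{1+\gamma}$ bounds on the finitely many martingales attached to modes in $\mathcal C_\mathrm l$ rather than just the a.s.\ limit of $H_t$.
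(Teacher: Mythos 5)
Your proposal is correct and follows essentially the same route as the paper: reduce $R(t)$ to the telescoping sum $R'(t)$ via Slutsky, apply Proposition \ref{cor:MI} to replace the joint characteristic function by the deterministic product of the $\langle Z_1\cdot,\varphi\rangle$ terms, and use Lemma \ref{ineq: analysis} together with the exponential decay of $T_k\tilde f_\mathrm s$ and the $t^{\tilde\beta-1}$ prefactor to kill the cross terms and identify the limit as $e^{m[\theta_\mathrm s f_\mathrm s]+m[\theta_\mathrm c f_\mathrm c]+m[-\theta_\mathrm l f_\mathrm l]}$. The only cosmetic difference is that you carry the $\theta_j$'s throughout (with a rescaling to stay inside $\overline{\mathcal P}_g$), whereas the paper computes at $(1,1,1)$ and then invokes linearity of $R'_j(t)$ in $f_j$.
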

\begin{proof}
	In this proof, we always work under $\mathbb{\widetilde{P}}_{\mu}$.
	According to the proof of 
\cite[Theorem 1.6]{RenSongSunZhao2019Stable} and the fact that the convergence in
 probability of random vectors to the zero vector is equivalent to the convergence of each components of the random vectors to zero, we have
\[
	R(t)-R'(t)
	\xrightarrow[t\to \infty]{\text{in probability}} 0.
\]
	With the help of Slutsky's theorem, what is left to show is that,
\begin{equation} \label{lem:UOT}
	R'(t)
	\xrightarrow[t\to \infty]{d}(\zeta^{f_\mathrm s},\zeta^{f_\mathrm c},\zeta^{-f_\mathrm l}).
\end{equation}

	Now we prove \eqref{lem:UOT}.
	Since $\Upsilon_t^f$ is linear in $f$, for each $t\geq 1$,
\[
        \widetilde{\mathbb P}_{\mu}\Big[\exp\Big(i \sum_{j=\mathrm s,\mathrm c,\mathrm l} R'_j(t)\Big)\Big]
	= \widetilde{\mathbb P}_{\mu}\Big[\exp\Big(i\sum_{k=0}^{\lfloor t-\ln t \rfloor}\Upsilon_{t-k-1}^{T_k(\tilde{f_\mathrm s}+t^{\tilde{\beta}-1} \tilde{f}_{\mathrm c})}\Big)\exp\Big(i\sum_{k=0}^{\lfloor t^2 \rfloor}\Upsilon_{t+k}^{-T_k\tilde{f}_{\mathrm l}}\Big)\Big].
\]
	Note that $\{T_k(\tilde f_\mathrm s + t^{\tilde\beta - 1}\tilde f_{\mathrm c}):k\in \mathbb Z_+, t\geq 1\}\subset \overline{\mathcal P}_{\tilde f_\mathrm s + \tilde f_{\mathrm c}}$ and $\{-T_k\tilde{f}_{\mathrm l}: k\in \mathbb Z_+\}\subset \overline{\mathcal P}_{\tilde f_\mathrm l}$.
	Therefore, we can use Proposition \ref{cor:MI} with $f$ 
taken as $\tilde{f_\mathrm s}+ \tilde{f}_{\mathrm c}$ and $g$ taken as
	$\tilde{f}_{\mathrm l}$
	to get that there exist $C_1,\delta_1 > 0$ such that for every $t>e$ (which implies $t\geq \lfloor t - \ln t\rfloor +1$),
\begin{align}
       &\begin{multlined}
  	\bigg|\widetilde{\mathbb P}_{\mu}\Big[\exp\Big(i \sum_{j=\mathrm s,\mathrm c,\mathrm l}R'_j(t)\Big)\Big] - {}
  	\\ \exp\Big(\sum_{k=0}^{\lfloor t-\ln t \rfloor} \big\langle Z_1\big(T_{k}(\tilde f_\mathrm s+t^{\tilde{\beta}-1}\tilde{f}_{\mathrm c})\big), \varphi \big\rangle \Big)\exp\Big(\sum_{k=0}^{\lfloor t^2 \rfloor}\langle Z_1(-T_k\tilde{f}_{\mathrm l}),\varphi\rangle\Big)\bigg|
  	\end{multlined}
      \\ &\leq C_1 e^{-\delta_1(t - \lfloor t - \ln t\rfloor)}.
\end{align}
	We claim that
\begin{align}
\label{eq:UOT.1}
	&\lim_{t\rightarrow\infty}\exp\Big(\sum_{k=0}^{\lfloor t-\ln t \rfloor}\big \langle Z_1\big(T_{k}(\tilde f_\mathrm s+t^{\tilde{\beta}-1}\tilde{f}_\mathrm c)\big), \varphi\big\rangle \Big)\exp\Big(\sum_{k=0}^{\lfloor t^2 \rfloor}\langle Z_1(-T_k\tilde{f}_\mathrm l),\varphi\rangle\Big)
	\\& =\exp(m[f_\mathrm s]+m[f_\mathrm c]+m[-f_\mathrm l]).
\end{align}
	Given this claim, we have
\[
	\widetilde{\mathbb P}_{\mu}\Big[\exp\Big(i \sum_{j=\mathrm s,\mathrm c,\mathrm l} R'_{j}(t)\Big)\Big]
	\xrightarrow[t\to \infty]{} \exp(m[f_\mathrm s]+m[f_\mathrm c]+m[-f_\mathrm l]).
\]
	Since $R'_{j}(t)$ are linear 
	 in $f_j \in \mathcal C_j~(j=\mathrm s,\mathrm c,\mathrm l)$, replacing $f_j$ with $\theta_j f_j$, we immediately get \eqref{lem:UOT}.

	Now we prove the claim \eqref{eq:UOT.1}.
	For every $f\in \mathcal C_\mathrm s  \oplus \mathcal C_\mathrm c$ and $n\in \mathbb Z_+$,
 \begin{align}
  	& \sum_{k=0}^n \langle Z_1 T_{k} \tilde f, \varphi \rangle
  	= \sum_{k=0}^n \int_0^1 \big\langle P_u^\alpha \big(\eta(-iP_{1 - u}^\alpha T_k \tilde f)^{1+\beta}\big), \varphi\big\rangle \mathrm du
 	\\& = \sum_{k=0}^n \int_0^1 e^{\alpha u} \langle \eta (-iP_{1 - u}^\alpha T_{k}\tilde f)^{1+\beta}, \varphi \rangle \mathrm du
 	= \sum_{k=0}^n \int_0^1 \langle \eta (-iT_{k+1 - u} f)^{1+\beta}, \varphi\rangle \mathrm du
 	\\&= \int_0^{n+1} \langle  \eta (-iT_{u} f)^{1+\beta}, \varphi\rangle \mathrm du
 	= m_{n+1}[f],
\end{align}
	where $\tilde f=e^{\alpha(\tilde \beta - 1)} f$.
	Therefore, for any $t\geq 1$,
\begin{equation} \label{eq:UOT.15}
	\sum_{k=0}^{\lfloor t-\ln t \rfloor} \langle Z_1T_{k}(\tilde f_\mathrm s+t^{\tilde{\beta}-1}\tilde{f}_\mathrm c), \varphi\rangle
	= \eta \int_0^{\lfloor t-\ln t \rfloor+1}\big\langle \big(-iT_u(f_\mathrm s+t^{\tilde{\beta}-1}f_\mathrm c)\big)^{1+\beta},\varphi \big\rangle \mathrm du.
\end{equation}
	Note that for each $u\geq 0$, $T_uf_\mathrm c=f_\mathrm c$.
	Also note that according to
 	Step 1 in the proof of \cite[Lemma 2.6]{RenSongSunZhao2019Stable}, there exist $\delta> 0$ and $h\in \mathcal P$ (depending only on $f_\mathrm s$) such that for each $u\geq 0$, $|T_u f_\mathrm s|\leq e^{-\delta u}h$.
	It follows from Lemma \ref{ineq: analysis} that there exists $C>0$ such that for all $u\geq 0$ and $t\geq 0$,
\begin{align}
  	&|(-i(T_uf_\mathrm s+t^{\tilde{\beta}-1}T_uf_\mathrm c))^{1+\beta}-(-iT_uf_\mathrm s)^{1+\beta}-(-it^{\tilde{\beta}-1}T_uf_\mathrm c)^{1+\beta}|
  	\\&  = |-i|^{1+\beta} |(T_u f + t^{\tilde \beta -1} T_u f_\mathrm c)^{1+\beta} - (T_u f_\mathrm s)^{1+\beta} - (t^{\tilde \beta - 1}T_u f_\mathrm c)^{1+\beta}|
  	\\&\overset{\text{Lemma \ref{ineq: analysis}}}
  	\leq  C(t^{-\frac{\beta}{1+\beta}}|T_uf_\mathrm s||T_uf_\mathrm c|^{\beta}+t^{-\frac{1}{1+\beta}}|T_uf_\mathrm s|^{\beta}|T_uf_\mathrm c|)
  	\\&\label{eq:UOT.21}\leq C(t^{-\frac{\beta}{1+\beta}}e^{-\delta u}h|f_\mathrm c|^{\beta}+t^{-\frac{1}{1+\beta}}e^{-\delta\beta u}h^{\beta}|f_\mathrm c|).
\end{align}
	This means that there exists $C_1 >0$ such that for all $t\geq 1$,
\begin{align}
	&\Big|\Big(\sum_{k=0}^{\lfloor t-\ln t \rfloor} \langle Z_1T_{k}(\tilde f_\mathrm s+t^{\tilde{\beta}-1}\tilde{f}_c), \varphi\rangle \Big)-m_{\lfloor t-\ln t \rfloor+1}[f_\mathrm s]-\frac{1}{t}m_{\lfloor t-\ln t \rfloor+1}[f_\mathrm c]\Big|
	\\&
\begin{multlined}
	\overset{\eqref{eq:UOT.15},\eqref{eq:M.13}}\leq\Big| \eta \int_0^{\lfloor t-\ln t \rfloor+1}\big\langle \big(-iT_u(f_\mathrm s+t^{\tilde{\beta}-1}f_\mathrm c)\big)^{1+\beta},\varphi \big\rangle \mathrm du - {}
	\\ \eta \int_0^{\lfloor t-\ln t \rfloor+1} \langle (-iT_u f_\mathrm s)^{1+\beta}, \varphi \rangle  \mathrm du - \eta \int_0^{\lfloor t-\ln t \rfloor+1} \langle (-iT_u f_\mathrm c)^{1+\beta}, \varphi \rangle  \mathrm du\Big|
\end{multlined}
	\\ & \overset{\eqref{eq:UOT.21}}\leq C_1\int_0^{\lfloor t-\ln t \rfloor+1}\langle t^{-\frac{\beta}{1+\beta}}e^{-\delta u}h|f_\mathrm c|^{\beta}+t^{-\frac{1}{1+\beta}}e^{-\delta\beta u}h^{\beta}|f_\mathrm c|, \varphi \rangle \mathrm du
	\\& \leq C_1 t^{-\frac{\beta}{1+\beta}} \langle h|f_\mathrm c|^{\beta},\varphi \rangle \int_0^\infty e^{-\delta u} \mathrm du + C_1 t^{-\frac{1}{1+\beta}} \langle h^\beta|f_\mathrm c|,\varphi \rangle \int_0^\infty e^{-\delta \beta u} \mathrm du
	\\& \xrightarrow[t\rightarrow \infty]{} 0.
\end{align}
	Combining this with \eqref{eq:I:R:3}, we get that
\begin{equation}
\label{eq:UOT.4}
	\lim_{t\rightarrow \infty}\exp\Big(\sum_{k=0}^{\lfloor t-\ln t \rfloor} \langle Z_1T_{k}(\tilde f_\mathrm s+t^{\tilde{\beta}-1}\tilde{f}_c), \varphi\rangle \Big)  = \exp( m[f_\mathrm s]+m[f_\mathrm c]).
\end{equation}
	Also note that according to the Step 1 in the Proof of Theorem 1.6.(3) in \cite{RenSongSunZhao2019Stable}, we have
\begin{equation}
\label{eq:UOT.5}
	\lim_{t\rightarrow \infty}\exp\Big(\sum_{k=0}^{\lfloor t^2 \rfloor}\langle Z_1(-T_k\tilde{f}_\mathrm l),\varphi\rangle\Big) =\exp(m[-f_\mathrm l]).
\end{equation}
	Thus the desired claim follows from \eqref{eq:UOT.4} and \eqref{eq:UOT.5}.
\end{proof}

\begin{proof}[Proof of Theorem \ref{thm:M}]
 	We first recall some facts about weak convergence which will be used later.
	For any bounded Lipschitz function $f:\mathbb R^d\mapsto \mathbb R$, let
\[
	\|f\|_L
	:=\sup_{x\neq y}\frac{|f(x)-f(y)|}{|x-y|}
\]
	and $\|f\|_{BL}:= \|f\|_{\infty}+\|f\|_L.$
	For any probability distributions $\mu_1$ and $\mu_2$ on $\mathbb R^d$, define
\[
	d(\mu_1,\mu_2)
  	:=\sup\Big\{\Big|\int f \mathrm d\mu_1-\int f \mathrm d\mu_2\Big|:\|f\|_{BL}\leq 1\Big\}.
\]
	Then $d$ is a metric. It follows from \cite[Theorem 11.3.3]{Dudley2002} that the topology generated by $d$ is equivalent to the weak convergence topology.
	Using the definition, we can easily see that, if $\mu_1$ and $\mu_2$ are the distributions of two $\mathbb R^d $-valued random variables $X$ and $Y$ respectively, defined on same probability space, then
\begin{align}
\label{ineq: distribution control}
  	d(\mu_1,\mu_2)
  	\leq \mathbb E|X-Y|.
\end{align}
	In this proof, let us fix $\mu\in \mathcal M_\mathrm c(\mathbb R^d)\setminus \{0\}$, $f_\mathrm s\in \mathcal C_\mathrm s$, $f_\mathrm c \in \mathcal C_\mathrm c$ and $f_\mathrm l \in \mathcal C_\mathrm l$.
	Recall that $S(t)~(t\geq 0)$ is given by \eqref{eq:M.1}.
	For every $r,t> 0$, let
\begin{align}
	S(t,r)
	:=\Big(e^{-\alpha t}\|X_t\|,& \frac{X_{t+r}(f_\mathrm s)}{\|X_{t+r}\|^{1-\tilde{\beta}}}, \frac{X_{t+r}(f_\mathrm c)}{\|(t+r)X_{t+r}\|^{1-\tilde{\beta}}}, \frac{X_{t+r}(f_\mathrm l)-\mathrm x_{t+r}(f_\mathrm l) }{\|X_{t+r}\|^{1-\tilde{\beta}}}\Big),
\end{align}
	and
\begin{align}
	\widetilde{S}(t,r)
	= (e^{-\alpha (t+r)}\|X_{t+r}\|-e^{-\alpha t}\|X_t\|,0,0,0),
\end{align}
	where, for any $t>0$, $\mathrm x_t(f_\mathrm l)$ is defined in \eqref{eq:oldResult} with $f$ replaced with $f_\mathrm l$.
	Then $S(t+r)=S(t,r)+\widetilde{S}(t,r)$.
	We claim that
\begin{equation}
\label{eq:M.3}
\begin{minipage}{0.9\textwidth}
	for each $t> 0$, under $\widetilde{\mathbb P}_{\mu}$, we have
\[
	S(t,r)
	\xrightarrow[r\rightarrow \infty]{d} (\widetilde H_t,\zeta^{f_\mathrm s},\zeta^{f_\mathrm c},\zeta^{-f_\mathrm l}),
\]
	where $\widetilde H_t$ has the distribution of $\{e^{-\alpha t} \|X_t\|; \widetilde {\mathbb P}_\mu\}$, $\zeta^{f_\mathrm s},\zeta^{f_\mathrm c}$ and $\zeta^{-f_\mathrm l}$ are the $(1+\beta)$-stable random variables described in \eqref{eq:I:R:3}, 	and $\widetilde H_t,\zeta^{f_\mathrm s},\zeta^{f_\mathrm c}$ and $\zeta^{-f_\mathrm l}$ are independent.
\end{minipage}
\end{equation}

	For every $r,t\geq 0$, let $\mathcal D(r)$ and $\mathcal D(r,t)$ be the distributions of $S(r)$ and $S(t,r)$ under $\widetilde{\mathbb P}_{\mu}$ respectively;
	let $\widetilde{\mathcal D}(t)$ and $\mathcal D$ be the distributions of $(\widetilde H_t,\zeta^{f_\mathrm s},\zeta^{f_\mathrm c},\zeta^{-f_\mathrm l})$ and $(\widetilde H_\infty,\zeta^{f_\mathrm s},\zeta^{f_\mathrm c},\zeta^{-f_\mathrm l})$, respectively.
	Then for each $\gamma\in (0,\beta)$, there exist constant $C>0$ such that for every $t > 0$,
\begin{align}
 	&\varlimsup_{r\rightarrow \infty}d(\mathcal D(t+r),\mathcal D)
 	\\& \overset{\rm triangle~inequality}\leq \varlimsup_{r\rightarrow \infty}\Big( d\big(\mathcal D(t+r),\mathcal D(t,r)\big)+d\big(\mathcal D(t,r),\widetilde{\mathcal D}(t)\big)+d\big(\widetilde{\mathcal D}(t),\mathcal D\big)\Big)
 	\\&\overset{\eqref{ineq: distribution control}} \leq \varlimsup_{r\rightarrow \infty}\widetilde{\mathbb P}_{\mu}[|S(t+r) - S(t,r)|]+ \varlimsup_{r\rightarrow \infty} d\big(\mathcal D(t,r),\widetilde{\mathcal D}(t)\big) +\widetilde{\mathbb P}_{\mu}[|H_t-H_{\infty}|]
	\\&\overset{\eqref{eq:M.3}}\leq \varlimsup_{r\rightarrow \infty}\widetilde{\mathbb P}_{\mu}[|H_t - H_{t+r}|]+\widetilde{\mathbb P}_{\mu}[|H_t-H_{\infty}|]
  	\\&\overset{\text{H\"older inequality}}\leq \varlimsup_{r\rightarrow \infty}\mathbb P_{\mu}(D^c)^{-1}(\|H_t-H_{t+r}\|_{L_{1+\gamma} (\mathbb P_\mu)}+\|H_t-H_{\infty}\|_{L_{1+\gamma} (\mathbb P_\mu)})
	\\&\label{eq:M.4}\overset{\text{\cite[Lemma 3.3]{RenSongSunZhao2019Stable}}}\leq C e^{-\alpha \tilde \gamma t}.
\end{align}
	Therefore,
\begin{align}
 	&\varlimsup_{r\rightarrow \infty}d\big(\mathcal D(r),\mathcal D\big)
 	=\varlimsup_{t\to \infty}\varlimsup_{r\rightarrow \infty}d\big(\mathcal D(t+r),\mathcal D\big)
	\overset{\eqref{eq:M.4}}\leq \varlimsup_{t\rightarrow \infty}Ce^{-\alpha \tilde \gamma t} = 0.
\end{align}
	The desired result now follows immediately.

	Now we prove the claim \eqref{eq:M.3}.
	For every $r,t >0$, let
\[
	\theta,\theta_\mathrm s,\theta_\mathrm c,\theta_\mathrm l \in \mathbb R
	\mapsto k(\theta,\theta_\mathrm s,\theta_\mathrm c,\theta_\mathrm l,r,t)
\]
	be the characteristic function of  $S(t,r)$ under $\widetilde{\mathbb P}_{\mu}$.
	Then for each $\theta,\theta_\mathrm s,\theta_\mathrm c,\theta_\mathrm l\in \mathbb R$ and $r,t> 0$,
\begin{align}
	&k(\theta,\theta_\mathrm s,\theta_\mathrm c,\theta_\mathrm l,r,t)
	=\widetilde{\mathbb P}_{\mu}\big[\exp\big( i\theta e^{-\alpha t}\|X_t\|+A(\theta_\mathrm s,\theta_\mathrm c,\theta_\mathrm l,r,t,\infty)\big)\big]\\
	&\label{eq:M.5}\overset{\text{bounded convergence}}=\lim_{u\rightarrow \infty}\frac{1}{\mathbb P_{\mu}(D^c)}\mathbb P_{\mu}\big[\exp\big( i\theta e^{-\alpha t}\|X_t\|+A(\theta_\mathrm s,\theta_\mathrm c,\theta_\mathrm l,r,t,u)\big);D^c\big],
\end{align}
	where for each $u\in [0,\infty]$,
\begin{align}
	&A(\theta_\mathrm s,\theta_\mathrm c,\theta_\mathrm l,r,t,u)
	\\&:=i\theta_\mathrm s \frac{X_{t+r}(f_\mathrm s)}{\|X_{t+r}\|^{1-\tilde{\beta}}} + i\theta_\mathrm c \frac{X_{t+r}(f_\mathrm c)}{\|(t+r)X_{t+r}\|^{1-\tilde{\beta}}} + i\theta_\mathrm l  \frac{ X_{t+r}(f_\mathrm l)- \mathbb P_\mu[\mathrm x_{t+r}(f_\mathrm l)|\mathscr F_u]}{\|X_{t+r}\|^{1-\tilde{\beta}}}
	\\&\label{eq:M.6}
\begin{multlined}
	=i\theta_\mathrm s \frac{X_{t+r}(f_\mathrm s)}{\|X_{t+r}\|^{1-\tilde{\beta}}} + \frac{i\theta_\mathrm c}{(t+r)^{1-\tilde{\beta}}} \frac{X_{t+r}(f_\mathrm c)}{\|X_{t+r}\|^{1-\tilde{\beta}}} + {}
	\\\quad  i\theta_\mathrm l  \frac{ X_{t+r}(f_\mathrm l) - \sum_{p\in \mathbb Z_+^d:\alpha\tilde{\beta}>|p|b} e^{(\alpha - |p|b)(t+r)} e^{-(\alpha - |p|b)u}X_u(\phi_p) }{\|X_{t+r}\|^{1-\tilde{\beta}}}.
\end{multlined}
\end{align}
	Now for each $t>0$, we get
\begin{align}
	& \lim_{r\to \infty}k(\theta,\theta_\mathrm s,\theta_\mathrm c,\theta_\mathrm l,r,t)
	\\&\overset{\eqref{eq:M.5}}=\lim_{r\to \infty}\lim_{u\rightarrow \infty}\frac{1}{\mathbb P_{\mu}(D^c)}\mathbb P_{\mu}\big[\exp\{i\theta e^{-\alpha t}\|X_t\|\} \mathbf 1_{\|X_t\|>0} \mathbb P_{\mu}[\exp\{A(\theta_\mathrm s,\theta_\mathrm c,\theta_\mathrm l,r,t,u)\}\mathbf 1_{D^c}|\mathscr F_t]\big]
	\\&
\begin{multlined}
	\overset{\text{\eqref{eq:M.6}, Markov property}}=\lim_{r\to \infty}\lim_{u\rightarrow\infty}\frac{1}{\mathbb P_{\mu}(D^c)} \mathbb P_{\mu}\Bigg[\exp\{ i\theta e^{-\alpha t}\|X_t\|\}\mathbf 1_{\|X_t\|>0} \times {}
	\\ \mathbb P_{X_t}\bigg[\exp\bigg\{A\bigg(\theta_\mathrm s,\theta_\mathrm c\Big(\frac{r}{t+r}\Big)^{1-\tilde \beta},\theta_\mathrm l,r,0,u-t\bigg)\bigg\}\mathbf 1_{D^c}\bigg]\Bigg]
\end{multlined}
	\\&\begin{multlined}\overset{\text{bounded convergence}}=\lim_{r\to \infty} \mathbb P_{\mu}\Bigg[\exp\{ i\theta e^{-\alpha t}\|X_t\|\} \mathbf 1_{\|X_t\|>0}\frac{\mathbb P_{X_t}(D^c)}{\mathbb P_{\mu}(D^c)} \times {}
	\\\widetilde{\mathbb P}_{X_t}\bigg[\exp\bigg\{A\bigg(\theta_\mathrm s,\theta_\mathrm c\Big(\frac{r}{t+r}\Big)^{1-\tilde \beta},\theta_\mathrm l,r,0,\infty\bigg)\bigg\}\bigg]\Bigg].
\end{multlined}
	\\& \overset{\text{Theorem \ref{thm: II}}}=  \mathbb P_{\mu}\Big[\exp\{ i\theta e^{-\alpha t}\|X_t\|\} \mathbf 1_{\|X_t\|>0}\frac{\mathbb P_{X_t}(D^c)}{\mathbb P_{\mu}(D^c)} \Big]\Big(\prod_{j=\mathrm s,\mathrm c}\exp\{m[\theta_j f_j]\}\Big)\exp\{m[-\theta_\mathrm l f_\mathrm l]\}
	\\&=\widetilde{\mathbb P}_{\mu}[\exp\{i\theta e^{-\alpha t}\|X_t\|\}]
	\Big(\prod_{j=\mathrm s,\mathrm c}\exp\{m[\theta_j f_j]\}\Big)\exp\{m[-\theta_\mathrm l f_\mathrm l]\}.
	\qedhere
\end{align}
\end{proof}

\end{document}